\DeclareMathOperator{\SL}{SL}
\DeclareMathOperator{\PGL}{PGL}
\DeclareMathOperator{\GL}{GL}
\newcommand{\et}{\mathrm{et}}
\DeclareMathOperator{\Spec}{Spec}
\DeclareMathOperator{\Gal}{Gal}
\newcommand{\field}[1]{\mathbb{#1}}
\newcommand{\disc}{\mathrm{disc}}
\newcommand{\Q}{\field{Q}}
\newcommand{\Z}{\field{Z}}
\newcommand{\R}{\field{R}}
\newcommand{\C}{\field{C}}
\renewcommand{\P}{\field{P}}
\newcommand{\ra}{\to}
\newcommand{\LL}{\mathcal{L}}
\newcommand{\eps}{\epsilon}
\newcommand{\bs}{\backslash}
\newcommand{\beq}{\begin{displaymath}}
\newcommand{\eeq}{\end{displaymath}}
\newcommand{\beqn}{\begin{equation}}
\newcommand{\eeqn}{\end{equation}}
\newcommand{\dR}{\text{dR}}
\newcommand{\an}{{\text{an}}}
\theoremstyle{plain}
\newtheorem{thm}{Theorem}[section]
\newtheorem{cor}[thm]{Corollary}
\newtheorem{lem}[thm]{Lemma}
\theoremstyle{definition}
\newtheorem{defn}[thm]{Definition}
\newtheorem{exmp}[thm]{Example}
\newtheorem{rem}[thm]{Remark}
\theoremstyle{remark}
\begin{document}

\title{Sparsity of integral points on moduli spaces of varieties}

\begin{abstract}
Let $X$ be a quasi-projective variety over a number field, 
admitting (after passage to $\mathbb{C}$) a geometric variation of Hodge structure whose period mapping
has zero-dimensional fibers. 
Then the integral points of $X$ are {\em sparse}:  the number of such points of height $\leq B$ grows slower than any positive power of $B$. 
 
For example,  homogeneous integral polynomials in a fixed number of variables and degree, with discriminant  divisible only by a fixed set of primes,  are  sparse
 when considered up to integral linear substitutions.

\end{abstract}

\author{Jordan S. Ellenberg, Brian Lawrence,  and Akshay Venkatesh} 

\maketitle

\begin{comment}

Following Koll\`{a}r~\cite[4.1]{koll:shaf}, we make the following definitions.

\begin{defn} If $X$ is a normal variety, a {\em
    normal cycle} on $X$ is a morphism $w:W \ra X$ such that $W$ is an
  irreducible normal variety and $w$ is finite and birational onto its
  image.
\end{defn}

\begin{defn} Let $X$ be a normal projective variety over an algebraically
  closed field, and let $Z$ be a proper closed
subvariety of $X$. We say $X$ has {\em large algebraic fundamental group away
  from $Z$} if, for every positive-dimensional normal cycle $W \ra X$,
the induced map $\pi_1(W \bs (W \cap Z)) \ra \pi_1(X \bs Z)$ has infinite image.
\end{defn}

If $X/k$ is a variety over an arbitrary field $k$ of characteristic
$0$, we say $X$ has large algebraic fundamental group away from $Z$ if
the criterion above holds for $X_{\overline{k}}$.  Note that if $X$ has
large algebraic fundamental group, the same is true for $X \times_k K$
where $K$ is any uncountable algebraically closed field of
characteristic $0$.  {\bf This needs further justification, it is not
  just Lefschetz principle but I think it is standard.}   

The main theorem of this paper is that varieties with large
fundamental group away from $Z$ have few $Z$-integral points.

\end{comment}

\section{Introduction}

Let $K \subset \C$ be a finite field extension of the rational numbers,
and $S$ a finite set of primes of $K$.  
We will  consider  $S$-integral points on quasi-projective $K$-varieties $X^{\circ} \subset \mathbb{P}^m$.

More precisely: 
in this situation we  will write $X$  for the Zariski closure of $X^{\circ}$ in $\mathbb{P}^m$,
 $\LL = \mathcal{O}(1)$  the associated hyperplane bundle, and   $Z$ for $X \backslash X^{\circ}$.
After choosing a good integral model of $X$ and $Z$ (see \S \ref{integralmodel} for details)
we obtain a notion of ``$S$-integral point of $X^{\circ}$,''
and the projective embedding allows us to refer to the ``height'' of such a point.   
(By ``height'' we always mean the \emph{multiplicative} height.)
We will denote by $X_{\C}$
the complex variety obtained by base extension via the fixed embedding $K \hookrightarrow \C$,
by $X(\C)$ its complex points,   
and by $X^{\an}$ its complex analytification.

\begin{thm} \label{thm1}
Let $X^{\circ} \subset \mathbb{P}^m$ be a quasi-projective variety over $K$
such that $X^{\circ}_{\C}$ admits a geometric variation of Hodge structure, 
whose associated period map (see \cite{Griffiths} for definitions)  is locally finite-to-one, i.e. has zero-dimensional fibers.  Then integral points on $X^{\circ}$ are sparse, in the sense that
\begin{equation}
\label{th:main}\# \{x \text{ an $S$-integral point of $X^{\circ}$, of (multiplicative) height at most $B$} \} =O_{\eps}(B^{\eps}).  \end{equation}
  \end{thm}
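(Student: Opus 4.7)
My plan is to deploy the $p$-adic method of Lawrence--Venkatesh. Because the variation of Hodge structure on $X^\circ$ is geometric, it arises (possibly after passing to a finite \'etale cover of $X^\circ$ and enlarging $K$) from a smooth proper family $f \colon Y \to X^\circ$ defined over $K$. Fix an auxiliary prime $p \notin S$ of good reduction for $f$. To each $S$-integral point $x \in X^\circ(\OO_{K,S})$ I attach the $p$-adic Galois representation $\rho_x := H^i_{\et}(Y_{x,\bar{K}}, \Qp)$ of $G_K$, which is unramified outside $S \cup \{v \mid p\}$, crystalline at primes above $p$, and has Hodge--Tate weights in a fixed finite set.

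By Faltings' finiteness theorem for semisimple Galois representations with fixed dimension, bounded ramification, and fixed Hodge--Tate weights, the semisimplifications $\rho_x^{\mathrm{ss}}$ fall into finitely many isomorphism classes; this partitions the integral points into finitely many classes. For each $p$-adic residue disk at a place $v \mid p$, I construct a $p$-adic analytic period map $\Phi_v$ sending $x$ to the Hodge filtration on the de Rham cohomology of $Y_x$, valued in an appropriate flag variety. The crystalline comparison theorem identifies $\Phi_v(x)$ with a filtration on the filtered $\varphi$-module attached to $\rho_x$, so that $\Phi_v(x)$ is determined by $\rho_x$ up to the action of a reductive group (roughly, the centralizer of Frobenius in the structure group). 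The locally finite-to-one hypothesis on the complex period map is then transferred to $\Phi_v$ by an algebraic rigidity argument: the fiber dimension is intrinsic to the underlying flat bundle with connection, and hence the same over $\C$ and over $\Qp$.

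For the sparsity bound $O_\eps(B^\eps)$, I will choose the auxiliary prime $p$ as a function of $B$ (for instance, of size $(\log B)^C$ for a large constant $C$) and sum the contributions over all residue disks modulo $p$: the number of such disks is at most $p^{\dim X}$, while the number of integral points per disk, per Galois class, can be controlled by a rigid-analytic zero-counting estimate applied to the (zero-dimensional) fibers of $\Phi_v$. Balancing the two quantities yields a bound subpolynomial in $B$. The hardest step will be transferring the local finite-fiber property from the complex period map to its $p$-adic analogue: this is not merely a GAGA statement, since the period map is a transcendental analytic object, so one must tie the two analytic period maps together through their common algebraic source, the Hodge bundle with its Gauss--Manin connection, and establish the fiber-dimension bound in a manner compatible with $p$-adic Hodge theory. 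A secondary difficulty is the quantitative calibration of $p$ against $B$: one needs an effective version of the $p$-adic fiber-dimension estimate so that the count of points per residue disk grows only polylogarithmically in $B$, which is what ultimately produces the subpolynomial bound.
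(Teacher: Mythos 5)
Your proposal takes a genuinely different route (the Lawrence--Venkatesh $p$-adic period map method rather than the paper's argument, which reduces Theorem \ref{thm1} to Theorem \ref{thm2} and then runs a Heath-Brown/Broberg determinant-method induction on dimension, using \'etale covers supplied by the global invariant cycle theorem to force up the degree of every intermediate subvariety). But as written there is a genuine gap at the central step. Faltings' finiteness puts the semisimplifications $\rho_x^{\mathrm{ss}}$ into finitely many classes, and within one class the crystalline comparison determines the $p$-adic period point $\Phi_v(x)$ only up to the action of the centralizer of the crystalline Frobenius on the flag variety of filtrations (together with the ambiguity in the weak admissibility constraint). This centralizer is in general a positive-dimensional group, so the set of points in a residue disk with a fixed $\rho_x^{\mathrm{ss}}$ is contained in $\Phi_v^{-1}(\text{an orbit})$, not in finitely many fibers of $\Phi_v$. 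The hypothesis that the complex period map is locally finite-to-one gives you no control over how the image of $\Phi_v$ meets these orbits; in Lawrence--Venkatesh this is exactly where the hard ``big monodromy'' input enters (one must show the period image has dimension strictly larger than every Frobenius-centralizer orbit), and that input is simply unavailable under the hypotheses of Theorem \ref{thm1}. Indeed the paper explicitly notes that the Lawrence--Venkatesh conclusion for hypersurfaces ``neither implies nor is implied by'' the present theorem. Without this step your zero-counting estimate has nothing zero-dimensional to count.

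A secondary problem is the calibration of $p$ against $B$: letting $p$ grow with $B$ changes the ramification set $S \cup \{v \mid p\}$, so the number of Galois classes from Faltings' theorem is not a constant but a function of $B$, and no effective bound on it is known in general (Faltings' argument is ineffective via Hermite--Minkowski). The paper avoids all of this by never invoking Galois representations of the fibers at all: it uses only the topological/\'etale monodromy of the local system $R^i\pi_*\Z/p^n$, the invariant cycle theorem to guarantee that monodromy stays large on every subvariety not contained in a period-map fiber, and the uniformity in degree of Broberg's point-counting bound. If you want to salvage a $p$-adic approach you would need to add a big-monodromy hypothesis far stronger than quasi-finiteness of the period map, at which point you would be proving a different (and in some respects stronger, in others incomparable) theorem.
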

  By ``geometric variation of Hodge structure''
we mean a direct summand of a VHS arising from a smooth projective family over $X^{\circ}_{\C}$; in particular the Hodge structures arising here are pure.  
In the situation of the Theorem, the ``period map'' is understood to be the complex-analytic map $\widetilde{X^{\circ \an}} \rightarrow D$
classifying the VHS, where $\widetilde{X^{\circ \an}}$ is the universal cover of $X^{ \circ \an}$ and $D$
is the period domain associated to the varying  Hodge structures. 

We note that the condition we are imposing on $X^{\circ}$ depends only on the complex algebraic variety $X^{\circ}_{\C}$ and not on its rational form over $K$. 

 The notation $O_{\eps}(B^{\eps})$ is that of analytic number theory:
for each  $\eps > 0$ there exists $c_{\eps}$ such that
the left hand side is at most $c_{\eps} B^{\eps}$.  
 
Theorem \ref{thm1} will be deduced from the following more general theorem.

\begin{thm}  \label{thm2}
Let $\pi: \mathfrak{X} \rightarrow X^{\circ}$
be a projective smooth morphism of $K$-varieties,  
and (for some $i \geq 0$) let $\Phi$ be the period map   
associated to the variation of Hodge structure
 $\mathsf{V} = R^i \pi_* \Q$ on $(X^{\circ})^{\an}$. 
 
 Then 
 the $S$-integral points of $X^{\circ}$ with height at most $B$
 are covered by $O_{\eps}(B^{\eps})$ geometrically irreducible
 $K$-varieties, each lying in a single fiber of $\Phi$.
 
\end{thm}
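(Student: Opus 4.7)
The plan is to adapt the $p$-adic Hodge-theoretic strategy of Lawrence--Venkatesh. Enlarge $S$ to a finite set $T$ of places of $K$ containing those at which the chosen integral model of $\pi\colon \mathfrak{X} \to X^\circ$ fails to be smooth and proper. For each $S$-integral point $x$ of $X^\circ$ and each rational prime $p$, form the $\GalK$-representation
\[
\rho_x := H^i_{\et}(\mathfrak{X}_{x,\overline{K}},\Qp).
\]
It has fixed dimension, is unramified outside $T \cup \{v : v\mid p\}$, and is crystalline at each $v\mid p$ with $v\notin T$.

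The central local step is a local constancy statement via $p$-adic Hodge theory. For $v\mid p$ with $v\notin T$, Fontaine's $D_{\text{crys}}$ attaches to the restriction $\rho_x|_v$ to a decomposition group at $v$ the filtered $\phi$-module $H^i_{\dR}(\mathfrak{X}_{x,K_v})$. The Frobenius on this module is determined by the mod-$v$ special fiber, hence by the image of $x$ in $X^\circ(\kappa_v)$, whereas the Hodge filtration varies with $x$ and is recovered from the restriction of $\Phi$ to the relevant $p$-adic residue disk through the canonical extension of $\mathsf{V}$. Consequently, if two integral points $x, x'$ lie in a common $p$-adic residue disk and $\rho_x|_v \cong \rho_{x'}|_v$, then $\Phi(x) = \Phi(x')$. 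Combining this with algebraicity of fibers of $\Phi$ (Cattani--Deligne--Kaplan; Bakker--Brunebarbe--Tsimerman) and their descent to $K$-subvarieties by Galois equivariance, each equivalence class cut out by ``same residue disk at $v$, same semisimple global $\rho_x^{\text{ss}}$'' is contained in a single fiber of $\Phi$; decomposing into geometrically irreducible components multiplies the count only by $O(1)$.

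It then suffices to bound the number of such equivalence classes by $O_\eps(B^\eps)$. The residue-class contribution at $v$ is $|X^\circ(\kappa_v)| = O_v(1)$. The delicate factor is the number of distinct semisimple $\rho_x^{\text{ss}}$ that arise: all are semisimple $\Qp$-representations of $\GalK$ of fixed dimension, unramified outside $T$, with Hodge--Tate weights in a fixed interval. A Faltings--Serre type argument distinguishes them by Frobenius characteristic polynomials at auxiliary primes $\mathfrak{p}\notin T$, each polynomial having integer coefficients of Weil-bounded size and depending on $x$ only through $x\bmod \mathfrak{p}$; hence only $O((N\mathfrak{p})^{\dim X^\circ})$ values occur. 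Allowing $p$ and the Frobenius-testing set to grow polylogarithmically in $B$ yields the bound $O_\eps(B^\eps)$.

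The main obstacle lies in this counting step. Unconditional Faltings-type finiteness of motivic Galois representations is only known for $i=1$, so for general $i$ the quantitative estimate must be extracted directly from the filtered $\phi$-module picture. One must verify that distinct fibers of $\Phi$ genuinely produce distinct admissible filtrations inside the flag variety (a non-degeneracy of the infinitesimal period map, visible $p$-adically), and then carry out the count uniformly as $p$, the residue disks at $v$, and the height bound $B$ are balanced against one another.
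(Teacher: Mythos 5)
Your proposal follows the Lawrence--Venkatesh $p$-adic Hodge-theoretic strategy, which is not the route the paper takes (the paper uses the determinant method of Bombieri--Pila, Heath-Brown and Broberg, iterated over subvarieties, with \'etale covers built from the global invariant cycle theorem to boost degrees); more importantly, as written your central local step is false. From $\rho_x|_v \cong \rho_{x'}|_v$ and a common residue disk, what $D_{\mathrm{crys}}$ gives is an isomorphism of \emph{filtered $\phi$-modules}: after the canonical identification of the underlying $\phi$-modules across the disk, the two Hodge filtrations lie in the same orbit of the centralizer of Frobenius acting on the flag variety --- not that they are equal. That centralizer is positive-dimensional in general, so its orbits are positive-dimensional, and you cannot conclude $\Phi(x)=\Phi(x')$. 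Bridging this gap is precisely the hard core of \cite{LV18}: one must show the image of the residue disk under the $p$-adic period map is not contained in any single such orbit, which requires a dimension count against the centralizer, a big-monodromy hypothesis on the family, and an Ax--Schanuel/transcendence input. None of this is available in the generality of Theorem \ref{thm2}, which assumes nothing beyond smoothness and projectivity of $\pi$ (not even quasi-finiteness of $\Phi$). The paper itself flags this: the results of \cite{LV18} for hypersurfaces hold only for ``large enough'' $n,d$ and neither imply nor are implied by the present theorem. A secondary but also genuine gap is the passage to semisimplification: Faltings' finiteness lemma does control the set of semisimple $\rho_x^{\mathrm{ss}}$ (your worry about $i>1$ is misplaced there --- that lemma is purely about Galois representations), but the filtered $\phi$-module is attached to $\rho_x|_v$ itself, and recovering enough of it from $\rho_x^{\mathrm{ss}}$ is another delicate point that \cite{LV18} must work to control and that you have not addressed.

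By contrast, the paper's argument is elementary on the arithmetic side: it covers the integral points of height $\le B$ on a subvariety $V$ by $O_\eps(B^\eps)$ divisors using Broberg's uniform bound, after first lifting the points to twists of a fixed \'etale cover $\widetilde{X}\to X$ (finitely many twists, by Hermite--Minkowski) chosen, via the invariant cycle theorem, so that its restriction to \emph{every} $V$ not lying in a fiber of $\Phi$ has degree $\ge D$; the high degree is what makes Broberg's exponent $\tfrac{n+1}{D^{1/n}}$ small. Descending induction on dimension then terminates in points or in subvarieties lying in fibers of $\Phi$. If you want to salvage a $p$-adic approach, you would need to import the full dimension-count and transcendence apparatus of \cite{LV18} and add hypotheses on the monodromy of $\mathsf{V}$; as stated, your proof does not go through.
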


We have abused language in the statement: for an irreducible analytic subvariety $V \subset X^{\circ \an}$ we shall  say that $V$ ``lies in a fiber of $\Phi$'' if some component (hence every component)
of the preimage of $V$ in the universal cover
lies in such a fiber.  We will use similar language elsewhere in the paper -- if a property of the period map is local,
we will phrase it on $X^{\circ}$ rather than the universal cover.

The bound \eqref{th:main} applies to many natural moduli spaces of varieties.   
For instance, one may take $X$ to be the projective space parametrizing hypersurfaces 
of a given degree and dimension, 
and $Z$ the locus of singular hypersurfaces, and obtain the following corollary, which we will derive from Theorem~\ref{thm1} in \S \ref{Corproof}.
\begin{cor} \label{Corhyp} Fix $n \geq 2$ and $d \geq 3$ and a
finite set $S$ of rational primes.  

Then the   $S$-integral homogeneous degree $d$ polynomials $P= \sum a_{i_1\dots i_n} x_1^{i_1} \dots x_n^{i_n}$ in $n$ variables with $S$-integer discriminant  and 
$\max_{i, v \in S} |a_i|_v \leq B$  
lie in at most $O_{S,d,n,\eps}(B^{\eps})$ orbits of  the group of  integral linear substitutions $\GL_n(\Z[\frac{1}{S}])$.

Similarly the set of such integral polynomials (i.e. with $\Z$ coefficients), 
with discriminant exactly equal to a nonzero
integer $N \in \mathbb{Z}$,  and $\max|a_i| \leq B$,
lie in at most $O_{N,d,n,\eps}(B^{\eps})$ orbits of  the group of unimodular  integral linear substitutions $\SL_n(\Z)$. \end{cor}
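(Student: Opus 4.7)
The plan is to apply Theorem~\ref{thm2} to an appropriate smooth projective family over the complement $X^\circ$ of the discriminant hypersurface in the projective space of degree-$d$ forms. Concretely, let $X = \P^N$ parametrize degree-$d$ forms in $n$ variables up to scalar, let $Z \subset X$ be the discriminant locus, and set $X^\circ = X \setminus Z$; an integral polynomial as in the corollary, having height $\leq B$, projectivizes to an $S$-integral point of $X^\circ$ of comparable height. Choose a smooth projective family $\pi : \mathfrak X \to X^\circ$ carrying a nontrivial VHS: for $n \geq 3$ and most $d$, the tautological family of hypersurfaces in $\P^{n-1}$ with middle cohomology $R^{n-2}\pi_*\Q$ does the job; for $n = 2$ take the hyperelliptic (or cyclic) cover $y^2 = P(x)$; for the small exceptional cases in which the middle cohomology is Hodge-theoretically trivial (e.g.\ cubic surfaces), use an auxiliary cyclic cover of $\P^{n-1}$ branched along the hypersurface.

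By construction, the associated period map $\Phi$ is constant on $\PGL_n(\C)$-orbits in $X^\circ$. Classical infinitesimal Torelli for smooth hypersurfaces of degree $d \geq 3$ (or for the chosen cyclic/hyperelliptic cover in the exceptional cases) shows the converse generically: the fibers of $\Phi$ coincide with $\PGL_n(\C)$-orbits on a dense Zariski open of $X^\circ$. Applying Theorem~\ref{thm2} to $\pi$, the $S$-integral points of $X^\circ$ of height $\leq B$ are covered by $O_\eps(B^\eps)$ geometrically irreducible subvarieties $V_1,\dots,V_m$, each contained in a fiber of $\Phi$. Since $\PGL_n$-orbits on $X^\circ$ are closed of maximal dimension (smooth hypersurfaces of degree $\geq 3$ have finite automorphism groups), irreducibility forces each $V_i$ to lie in a single $\PGL_n(\C)$-orbit.

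It remains to bound, by $O(1)$, the number of $\GL_n(\Z[1/S])$-orbits of integral polynomials with $S$-unit discriminant contained in a single $\PGL_n(\C)$-orbit on $X^\circ$. This is a classical finiteness of integral forms of bounded discriminant (Birch--Merriman for binary forms, Evertse--Gy\H{o}ry in general), equivalent to finiteness of the Galois cohomology $H^1(G_{K,S}, \Aut)$ for the finite automorphism group $\Aut$ of a smooth hypersurface. Combining, the total number of orbits is $O_\eps(B^\eps) \cdot O(1) = O_\eps(B^\eps)$, proving the first claim. The second claim is proved identically, restricting to the fixed-discriminant locus $\{\disc = N\} \subset X^\circ$ and replacing $\PGL_n$ by the subgroup $\SL_n$ that preserves the discriminant.

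The main obstacle is the Torelli input. The choice of VHS --- middle cohomology versus an auxiliary cyclic cover --- must be made case by case for small $(n,d)$ where the naive middle cohomology is Hodge-theoretically trivial, and verifying infinitesimal Torelli for the cover requires some bookkeeping. Given this input, however, the combination of Theorem~\ref{thm2} with the classical integral Birch--Merriman finiteness is formal.
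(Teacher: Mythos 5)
Your overall skeleton matches the paper's: take $X=\P^N$ with $Z$ the discriminant locus, invoke infinitesimal Torelli to identify fibers of the period map with $\PGL_n(\C)$-orbits, apply Theorem~\ref{thm2} to cover the integral points by $O_\eps(B^\eps)$ orbits of $\PGL_n(\C)$, and then bound the number of integral orbits inside each complex orbit. But the last step, which you dispatch in one sentence as ``classical finiteness of integral forms of bounded discriminant (Birch--Merriman for binary forms, Evertse--Gy\H{o}ry in general),'' is a genuine gap: no such classical result exists for general forms in $n\geq 3$ variables. (Evertse--Gy\H{o}ry treat binary and decomposable forms; the general statement is precisely of the Shafarevich type that the paper's introduction flags as open.) The statement you actually need --- that the $S$-integral forms with $S$-unit discriminant lying in a \emph{single} $\GL_n(\C)$-orbit fall into $O_{n,d,S}(1)$ orbits of $\GL_n(\Z[\frac{1}{S}])$ --- is weaker than Shafarevich but still requires proof, and the paper spends most of the argument on it. The reduction to finiteness of $H^1(G_{\Q,S},\Aut)$ that you gesture at is the right idea, but it hides two nontrivial points: (i) one must show that the $\Aut$-torsors (and transporter sets) attached to $S$-integral points are unramified outside a \emph{fixed} finite set of primes depending only on $n,d,S$ --- the paper's Claim~2, which rests on finiteness/properness of the action morphism $G\times Y\to Y\times Y$ (Claim~1, via Matsumura--Monsky and an explicit properness argument), integrality of the matrix entries of transporter elements, and injectivity of reduction mod $p$ on these finite groups; only then does Hermite--Minkowski give the $O(1)$ bound. (ii) One must then pass from $G(\Q)$-orbits to $G(\Z[\frac{1}{S}])$-orbits, which again uses the integrality of transporter elements. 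Without these, the per-orbit count is not bounded.

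Two smaller points. First, your treatment of the exceptional cases differs from the paper's: for $n=2$ and for cubic surfaces $(n,d)=(4,3)$ the paper simply cites known finiteness theorems (Birch--Merriman, resp.\ Scholl), whereas you propose to rerun Theorem~\ref{thm2} with an auxiliary hyperelliptic or cyclic-cover family. That route is plausible but leaves an unverified infinitesimal Torelli statement for the auxiliary family (and, e.g., for low-degree binary forms the branch divisor and the verification are genuinely delicate), so as written it is an additional gap rather than an alternative proof. Second, your assertion that $\PGL_n(\C)$-orbits in $X^\circ$ are closed because stabilizers are finite is backwards: closedness of orbits follows from properness of the action (again the paper's Claim~1), not from finiteness of stabilizers alone.
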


 Recall that the {\em discriminant} of a homogeneous polynomial is a certain homogeneous polynomial in the coefficients of $P$
which vanishes precisely when the hypersurface defined by $P=0$ is singular; see \cite{PS} for more discussion of its significance.

 In the paper \cite{LV18} of the second- and third- named author, it is proved by a much more complicated argument that
for ``large enough'' $n,d$ the  
set of polynomials with $S$-integer discriminant  is not Zariski dense in the ambient affine hypersurface (which of course neither implies nor is implied by what we prove here).
 It is expected (\cite[Conj.\ 1.4]{JL}) that this set is {\em finite} as part of a ``Shafarevich-type'' statement
about hypersurfaces with good reduction;
in fact, if one assumes the Lang-Vojta conjecture, 
then results on the hyperbolicity of period domains
imply finiteness of $S$-integral points in the general context of Theorem \ref{thm2}; see \cite{Zuo} and \cite[Thm.\ 1.5]{JL}.\footnote{
The theorem in loc.\ cit.\ is stated only for complete intersections, 
but the proof applies in the generality of our Theorem \ref{thm2}.}
We note that the Shafarevich assertion can hold even in cases where there is no Torelli theorem; for instance, it holds for the case $(n,d) = (4,3)$ of cubic surfaces, by a result of Scholl~\cite{scholl:delpezzo}. 
In addition to cubic surfaces, the Shafarevich assertion is also known for cubic and quartic threefolds \cite{JL}.

 \subsection{Discussion of the proof}
 
 The reduction of Theorem \ref{thm1} to Theorem \ref{thm2}   is relatively formal, and so we will focus
 on the latter Theorem here.

 The idea of the proof goes back to ideas initiated in the work of Bombieri-Pila \cite{BP}, Heath-Brown \cite{HBR}
 (and, independently and in a different context, Coppersmith \cite{Coppersmith}): 
 One can show that an excess of rational (or integral) points on any $X$ as above must lie on the zero-loci
 of some auxiliary functions, i.e., must be covered by a certain collection of divisors.
Then we iterate the process, replacing $X$ by these divisors, and covering the points by codimension-$2$ subvarieties, etc.; since we have almost no control over what these subvarieties look like, it is crucial to have, as in Bombieri-Pila and Heath-Brown, that the basic bounds are {\em uniform} in the ambient variety. See Remark \ref{Brobremark}
for a quick review of this method.

 Unfortunately, this process is quite lossy when executed in dimensions larger than $1$.
 The problem is the usual one in studying rational points on higher dimensional varieties: Even if a variety
 has high degree, it may contain subvarieties of much lower degree, such as hyperplanes. 
For this reason, there  are very few situations where one can achieve $B^{\eps}$-type bounds (see e.g.\ \cite{PW} for an example 
in a different context). 

The key point of our strategy here is to use the fact that the bounds of \cite{Broberg} improve
under {\'e}tale covers, which are plentiful under the conditions on $\mathfrak{X} \rightarrow X$ we have imposed; these covers can be used to raise the degree of intermediate subvarieties.   To deploy this in a way suitable for an iterative argument, we use 
the global invariant cycle theorem of Hodge theory to construct a  cover $\widetilde{X} \rightarrow X$
 which remains nontrivial when restricted to any subvariety of $X$.  In some sense, this strategy generalizes the approach of \cite{EV:2d}, which used \'etale covers to slightly improve on Heath-Brown's bounds in case $X$ is a non-rational curve (i.e. a curve with interesting \'etale covers); in the case $\dim X = 1$, of course, the issue of restriction of covers to subvarieties becomes trivial.

In the end, the core part of the proof is quite short -- the main idea is the induction on dimension laid out in  \S \ref{conclude}, with the key induction step being provided by Lemma \ref{lem:induction}. The reader may want to 
skip directly to these, and refer back as necessary.   One of the reasons for the length of the text is   
 the technicalities
involved in making various results uniform over all subvarieties of $X$.

One way of thinking about the strategy executed here is in terms of ``profinite repulsion" between low-height integral points.  For simplicity of exposition we take $K=\Q$ for this paragraph.  For any prime $p$, the embedding of $X(\Z[1/S])$ into $X(\Q_p)$ induces a topology on the former set, and the methods of Bombieri-Pila and Heath-Brown rely on an argument that low-height points tend to repel each other in this topology.  In the cases considered in this paper, there is an alternate profinite topology on $X^{\circ}(\Z[1/S])$ afforded by the map
 \beq
 X^{\circ}(\Z[1/S]) \ra H^1(G_\Q, \pi_1(X^{\circ}_{\overline{\Q}}))
 \eeq
 and our approach can be thought of as exploiting the fact that in this topology, too, the low-height $S$-integral points repel each other.  The two profinite repulsion phenomena working in tandem is what allows us to get a better upper bound on the number of low-height points.  To be more precise:  to say that two points $P_1$ and $P_2$ of $X^{\circ}(\Z[1/S])$ are close together in the alternate profinite topology is to say that there is some high-degree cover $\widetilde{X}^\circ \ra X^\circ$, with $\widetilde{X}^\circ$ irreducible, such that $P_1$ and $P_2$ both lie in the image of $\widetilde{X}^\circ(\Q)$.  Then the method of Heath-Brown is used to control the low-height points of $\widetilde{X}^\circ(\Q)$, which is aided by the fact that this new variety has very high degree.

In the course of writing this paper we learned of the paper \cite{Br} of Brunebarbe, which uses the same  basic idea of increasing the positivity of line bundles by passing to {\'e}tale covers, in the context of a VHS with finite period map, in order to prove uniform algebro-geometric assertions about covers of varieties carrying such a VHS.  Indeed
Brunebarbe gives a more analytic (and therefore effective) proof of a result (\cite[Theorem 1.13]{Br}) of the type of Lemma \ref{lem31}, whereas
our argument uses general finiteness statements of algebraic geometry. As in our paper, a crucial input to \cite{Br} 
 is the invariant cycle theorem of Hodge theory.

Finally, let us mention some potential refinements of the result.
\begin{itemize}
\item The reader will observe that the {\em geometricity} of the variation of Hodge structure is,
in fact, barely used at all in our proof. It permits us to phrase
the argument in quite an explicit way, but it should
be possible to entirely avoid it.  
\item Indeed, the whole apparatus of  variations of Hodge structures is used simply to provide examples of varieties $X^\circ$ with the property that the image of $\pi_1(W^{\circ \an}) \ra \pi_1(X^{\circ \an})$ has infinite order for every subvariety $W^\circ$ of $X^\circ$.  We note that this is essentially the same as Koll\'{a}r's condition of {\em large fundamental group} from \cite{kollar}. 
For an $X^{\circ}$ which possessed this property for some reason unrelated to Hodge structures, the arguments here should work just as well to control its rational points.  For example, abelian varieties do not admit variations of Hodge structure but they do have large fundamental group, so the arguments of this paper should in principle imply sparseness of rational points; but for abelian varieties, we know this already, by the Mordell-Weil theorem.
\item One might ask what happens for varieties $X^\circ$ carrying a variation of {\em mixed} Hodge structures; it seems plausible that a theorem like the one proved here will still hold. 
 \item It would be interesting to refine our statement $O_{\eps}(B^{\eps})$ to a more precise upper bound.  To do this would, at the very least, require that the inexplicit dependence on degree in Broberg's bounds be made explicit.  Questions in this vein have been the topic of much recent progress; see, e.g., the recent result of Castryck, Cluckers, Dittman, and Nguyen~\cite[Theorem 2]{CCDN}, which gives bounds with explicit polynomial dependence on degree.
The quantitative approach of \cite{Br} will also likely be needed in order to get effective lower bounds for the degrees of covers that arise in the argument. 
\end{itemize}

 \subsection{Notation and integral models} \label{integralmodel}
 
 As above, $K$ is a number field embedded in $\C$;  
 let $\overline{K}$ be the algebraic closure of $K$ in $\C$ (so $\overline{K}$ is just another name for $\overline{\mathbb{Q}}$). 
 
 By a variety $V$ (over $K$), or $K$-variety for short,  we mean, as usual, an integral, separated, finite-type scheme over $K$.
 In this situation, 
 $V$ will always denote the scheme over $K$, 
 and $V_{\C}$ the base extension of $V$ to a complex scheme via $K \hookrightarrow \C$. 
 (Note that a $K$-variety is not required to be geometrically irreducible; thus, $V_{\C}$ may have multiple components.) 
We will sometimes use notation such as $Y_{\C}$ even when $Y$ is not a base-change from $K$,
simply to emphasize that $Y_{\C}$ is a complex variety.  
 We will write $V^{\an}$ for the (complex) analytification of $V_{\C}$.

As above let $S$ be a finite set of primes of $K$. 
 Let $\mathfrak{o}_S$ be the ring of $S$-integers of $K$,
 i.e.\ the set of those elements of $K$ that are integral outside $S$.

\begin{defn} \label{goodmodel}
Let $X^{\circ} \subset \mathbb{P}^m$  
be a locally closed subvariety, and $X$
its Zariski closure. 
Let $Z$ be the complement $X \backslash X^{\circ}$, a Zariski-closed subset of $\mathbb{P}^m$. 
 
 We will call a \emph{good integral model} for $(X, Z)$ a choice of a
projective flat $\mathfrak{o}_S$-scheme $X_S \subset \mathbb{P}^N_S$ 
extending $X$ on the generic fiber. 
Given a good integral model, we define $Z_S$ to be the Zariski closure of $Z$ inside $X_S$, and take $X^{\circ}_S = X_S - Z_S$.

If we are given a projective smooth morphism $f: \mathfrak{X} \rightarrow X^{\circ}$  
and we have chosen a good integral model for $(X,Z)$, a good integral model for $f$ will be
a projective smooth morphism
$f_S:  \mathfrak{X}_{S} \rightarrow X^{\circ}_S$. 
\end{defn}

Good integral models exist after possibly increasing $S$, by standard spreading arguments 
(a nice reference here is Theorem 3.2.1 and Appendix C, Table 1 of \cite{PoonenRP}).

A $K$-point of $X$ will be said to be {\em $S$-integral}
if it extends to a morphism $\mathrm{Spec} (\mathfrak{o}_S) \rightarrow X_S-Z_S$. 
Explicitly,  assuming $\mathfrak{o}_S$ to have class number one for simplicity, 
any $K$-point, represented by a point
 with relatively prime homogeneous coordinates $\mathbf{x} = [x_0: \dots: x_N] \in \mathfrak{o}_S^{N+1}$,
is \emph{integral} at a prime $p \notin S$ if there exists
  a homogenous function $f \in \mathfrak{o}_S[x_0, \dots, x_N]$
 in the ideal of $Z_S$ such that   $f(\mathbf{x})$ 
is nonzero mod $p$; it is $S$-integral if it is integral at all primes $p \notin S$. 

 If we choose two different choices of good integral model as above, there exists a finite set $T$
such that any $S$-integral point for the first model is $S \coprod T$-integral
for the second model, so the choice of good integral model is irrelevant to the statement of the theorem.

 Finally, we fix some other notation to be used:
For any field $F$ and any
$F$-variety $Y$ mapping to $X$,  
we denote by $Y^{\circ}$ the preimage of $X^{\circ}$ inside $Y$. 
If the map $f:Y \rightarrow X$ is finite, the ``degree'' of $Y$ will  
 be the degree of the pullback $f^* \LL$ of the hyperplane bundle  $\LL$, characterized by the asymptotic formula
\begin{equation} \label{growth}  \dim_F \Gamma(Y, f^* \LL^{\otimes k}) \sim \frac{ \mathrm{deg} Y}{(\dim Y)!} k^{\dim Y} \end{equation} 
for large $k$.

\subsection{Acknowledgements}
We are grateful to Michael Kemeny for helpful discussions about the quasi-finiteness of period maps,
and to Brian Conrad for help and corrections to \S \ref{res}.   The first author was supported by NSF grant DMS-2001200,
the second by NSF grant DMS-2101985,
and the third by NSF grant DMS-1931087.

\section{Some results used in the proof}
  \label{res}
We collect results we will use in the proof. 
These
are a few lemmas from algebraic geometry, and a crucial
bound on rational points due to Broberg.

We emphasize again that, for $F$ a field, a variety over $F$  
is assumed reduced and irreducible, but not geometrically irreducible. 

\begin{lem} \label{KS}
Suppose that $\pi: \mathfrak{X}  \rightarrow X$
is a projective smooth morphism of   algebraic varieties
over a subfield $\kappa \subset \C$, with
$X$ smooth, 
and let $\mathcal{T}$ be the tangent bundle of $X$. 
Let
$\Phi$ be the period map   
associated to the variation of Hodge structure 
 $\mathsf{V} = R^i \pi_* \Q$ on $(X)^{\an}$,
for some $i>0$. 

Then there is a morphism 
\begin{equation}
\label{gdef} g \colon \mathcal{H}_1 \otimes \mathcal{T} \rightarrow \mathcal{H}_2\end{equation}
of vector bundles
over $X$ (everything defined over $\kappa$)  such that
the derivative of the period map along a tangent vector $t$ at $x$ vanishes if and only if 
 $g_x( -, t)$  
is the zero map from $\mathcal{H}_1$ to $\mathcal{H}_2$.

\end{lem}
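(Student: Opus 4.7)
The plan is to construct $g$ from the algebraic Kodaira--Spencer class together with cup product, and then invoke the classical theorem of Griffiths which identifies the differential of the period map with cup product against Kodaira--Spencer on the associated graded of the Hodge filtration. The only real point requiring care is to verify that this construction descends to $\kappa$, which is where the assumption that $\pi$ is projective and smooth pays off.

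Concretely, I would take $\mathcal{H}^i_{\dR} = R^i\pi_*\Omega^{\bullet}_{\mathfrak{X}/X}$ to be the algebraic de Rham cohomology bundle, a vector bundle on $X$ carrying its Hodge filtration $F^{\bullet}$; both are defined over $\kappa$ by results of Katz--Oda and Deligne. I would then set
\beq
\mathcal{H}_1 = \bigoplus_p F^p\mathcal{H}^i_{\dR}/F^{p+1}\mathcal{H}^i_{\dR} \cong \bigoplus_p R^{i-p}\pi_*\Omega^p_{\mathfrak{X}/X},
\eeq
the isomorphism coming from $E_1$-degeneration of the Hodge-to-de~Rham spectral sequence, and similarly $\mathcal{H}_2 = \bigoplus_p F^{p-1}\mathcal{H}^i_{\dR}/F^p\mathcal{H}^i_{\dR} \cong \bigoplus_p R^{i-p+1}\pi_*\Omega^{p-1}_{\mathfrak{X}/X}$. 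The bilinear map $g$ is then defined piece by piece: the algebraic Kodaira--Spencer map yields $\mathrm{KS} \colon \mathcal{T} \to R^1\pi_* T_{\mathfrak{X}/X}$, and for each $p$ the algebraic cup-product-and-contraction pairing gives a map $R^1\pi_*T_{\mathfrak{X}/X}\otimes R^{i-p}\pi_*\Omega^p_{\mathfrak{X}/X} \to R^{i-p+1}\pi_*\Omega^{p-1}_{\mathfrak{X}/X}$; summing in $p$ and composing with $\mathrm{KS}$ produces the desired $g\colon \mathcal{H}_1\otimes \mathcal{T} \to \mathcal{H}_2$, all defined over $\kappa$.

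To see that $g$ has the claimed property, I would appeal to Griffiths' theorem on the derivative of the period map: by Griffiths transversality, $d\Phi_x(t)$ lives in the horizontal tangent space $\bigoplus_p \mathrm{Hom}(F^p/F^{p+1},\,F^{p-1}/F^p)$ to $D$ at $\Phi(x)$, and this theorem identifies $d\Phi_x(t)$ precisely with the tuple of maps induced by cupping with $\mathrm{KS}_x(t)$ on the associated graded pieces. Therefore $d\Phi_x(t)=0$ if and only if each component $F^p/F^{p+1}\to F^{p-1}/F^p$ vanishes, i.e.\ if and only if $g_x(-,t)=0$. The main obstacle I anticipate is verifying that Griffiths' computation, which is usually phrased analytically via Dolbeault cohomology, agrees with the algebraic cup product picture above after extending scalars to $\C$; this is a routine consequence of GAGA together with the compatibility of the algebraic and analytic Kodaira--Spencer maps and of the Gauss--Manin connections, but it is the one step where some care is required.
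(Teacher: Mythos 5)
Your proposal is correct and follows essentially the same route as the paper: both rest on the algebraicity over $\kappa$ of the relative de Rham bundle, its Hodge filtration, and the Gauss--Manin connection (Katz--Oda), together with the fact that this algebraic data computes the differential of the period map into the flag variety $D$. The only difference is cosmetic: the paper takes $\mathcal{H}_1 = \bigoplus_p F^p H_{\dR}$ and $\mathcal{H}_2 = \bigoplus_p H_{\dR}/F^p H_{\dR}$ with $g$ induced directly by the connection, whereas you pass to the associated graded pieces and describe $g$ as cup product with the Kodaira--Spencer class; by Griffiths transversality the two vanishing conditions coincide, so either formulation proves the lemma.
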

\proof

This is a consequence of standard facts in Hodge theory, in particular the algebraicity of the Gauss--Manin connection (see \cite{Katz_Oda} and \cite[\S 1]{Katz_nilp}).

The $i$th relative algebraic de Rham cohomology of $\mathfrak{X}/X$ gives a variation of Hodge structure $H_{\dR}$ on $X$.
As a variation of Hodge structure, $H_{\dR}$ is a vector bundle, 
equipped with a flat connection and a filtration by algebraic subbundles $F^p H_{\dR}$.
These data determine the period map $\widetilde{X^{\circ \an}} \rightarrow D$,
where $D$ is a flag variety classifying filtrations 
of a fixed vector space of dimension $\operatorname{rank} H_{\dR}$
by subspaces of dimensions $\operatorname{rank} F^p H_{\dR}$.

For each step $F^p H_{\dR}$ of the filtration, the connection on $H_{\dR}$ defines an $\mathcal{O}_X$-linear map
\[ g_p \colon F^p H_{\dR} \otimes \mathcal{T}  \rightarrow H_{\dR} / F^p H_{\dR}, \]
which describes how the subspace $F^p H_{\dR, x}$ varies in $x \in X$.
Combining the maps $g_p$ over all $p$, we obtain
\[ g \colon \bigoplus_p F^p H_{\dR} \otimes \mathcal{T} \rightarrow  \bigoplus_p  H_{\dR} / F^p H_{\dR}.  \]
This map $g$ determines the differential of the period map.
More precisely, at every point, the tangent space to $D$ is identified with a subspace of
\[ \operatorname{Hom} \left (  \bigoplus_p F^p H_{\dR}, \bigoplus_p  H_{\dR} / F^p H_{\dR}  \right ), \]
and with this identification, $g$ gives the differential of the period map.

Finally, we note that $g$ is defined over $\kappa$ by  \cite{Katz_Oda} and \cite[\S 1]{Katz_nilp}.

\qed

\begin{lem} \label{degree}
Let $F$ be a field of characteristic zero.

Suppose that $Y$ is a proper $F$-variety equipped with an ample line bundle $\LL$,
and let $g: \widetilde{Y} \rightarrow Y$ be finite, with $\dim(\widetilde{Y}) = \dim(Y)$. 
Writing $\deg g$ for the degree of $g$ at the generic point, we have 
 \begin{equation} \label{degfinite} \deg_{g^* \LL}(\widetilde{Y}) = (\deg g) \deg_{\LL}(Y).\end{equation}
 \end{lem}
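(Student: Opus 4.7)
The plan is to use the projection formula together with asymptotic Riemann--Roch (Snapper's lemma), working directly from the defining growth formula \eqref{growth}. Since $g \colon \widetilde{Y} \to Y$ is finite and $\LL$ is ample, the pullback $g^*\LL$ is ample on $\widetilde{Y}$, and moreover $R^i g_* \mathcal{F} = 0$ for $i > 0$ and any coherent $\mathcal{F}$ on $\widetilde{Y}$. The projection formula then yields an isomorphism
\[
\Gamma(\widetilde{Y}, g^* \LL^{\otimes k}) \;\cong\; \Gamma\bigl(Y,\, \LL^{\otimes k} \otimes g_* \OO_{\widetilde{Y}}\bigr).
\]

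Next I would analyze the coherent sheaf $\mathcal{F} := g_* \OO_{\widetilde{Y}}$ on $Y$. Because $g$ has equal source and target dimension, it is dominant, hence surjective since $Y$ is irreducible and $g$ is proper; thus $\mathcal{F}$ is supported on all of $Y$, and its generic rank equals the degree $[K(\widetilde{Y}):K(Y)] = \deg g$ of the corresponding function-field extension. Now I would apply the standard asymptotic Riemann--Roch for a coherent sheaf on a proper variety with an ample line bundle: for $k \gg 0$ the higher cohomology vanishes (Serre vanishing), and
\[
\dim_F \Gamma\bigl(Y,\, \LL^{\otimes k} \otimes \mathcal{F}\bigr) \;=\; \chi(Y, \LL^{\otimes k} \otimes \mathcal{F}) \;=\; \frac{(\operatorname{rank} \mathcal{F})\cdot \deg_\LL(Y)}{(\dim Y)!}\, k^{\dim Y} + O(k^{\dim Y - 1}),
\]
where the leading coefficient is the content of Snapper's lemma combined with the compatibility of the intersection-theoretic degree with the growth-rate definition \eqref{growth} applied to $Y$ itself (the case $g = \id$).

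Substituting $\operatorname{rank} \mathcal{F} = \deg g$ and $\dim \widetilde{Y} = \dim Y$ and comparing with the defining formula \eqref{growth} for $\deg_{g^*\LL}(\widetilde{Y})$ gives the claimed equality. The only delicate point is identifying the generic rank of $g_* \OO_{\widetilde{Y}}$ with $\deg g$ when $Y$ is not assumed smooth (so $g$ need not be flat); but this is harmless because generic rank is computed at the generic point of $Y$, where $\mathcal{O}_{Y,\eta}$ is a field and $\mathcal{F}_\eta$ is just the function field $K(\widetilde{Y})$ of dimension $\deg g$ over it. No other genuine obstacle arises.
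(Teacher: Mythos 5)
Your proposal is correct and follows essentially the same route as the paper: both proofs apply the projection formula to reduce to computing $\Gamma(Y, \LL^{\otimes k}\otimes g_*\OO_{\widetilde{Y}})$ and then extract the leading term from the fact that $g_*\OO_{\widetilde{Y}}$ has generic rank $\deg g$. The only cosmetic difference is that you justify the leading-coefficient computation via Snapper's lemma and Serre vanishing, while the paper argues directly from the generic freeness of $g_*\OO_{\widetilde{Y}}$ of rank $\deg g$ away from positive codimension.
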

 
 We will apply this only when $\widetilde{Y}$ is a variety, but the argument does not require that, taking
 \eqref{growth} as the definition of degree.

\proof
By the projection formula we have
   $$\Gamma(\widetilde{Y}, g^* \LL^{\otimes k}) = \Gamma(Y, g_* \mathcal{O} \otimes \LL^{\otimes k}).$$
Now $g_* \mathcal{O}$ is isomorphic to $\mathcal{O}^{\oplus (\mathrm{deg} g)}$
away from a set of positive codimension on $Y$. Therefore
 $\Gamma(\widetilde{Y}, g^* \LL^{\otimes k})$
 coincides with 
 $(\mathrm{deg} \ g) \frac{ \mathrm{deg}_{\LL} Y}{(\dim Y)!} k^{\dim Y}$
 for large $k$, up to terms $O(k^{\dim Y-1})$.  We conclude using $\dim \widetilde{Y} = \dim Y$. 
\qed 

The next Lemma 
 is  \cite[Expos{\'e} I, Corollaire 10.8]{SGA1}.

\begin{lem} \label{disjointness}
Suppose that $X$ is an irreducible normal noetherian scheme
and $f: Y \rightarrow X$ a finite {\'e}tale cover. Then the irreducible
components of $Y$ are disjoint.
\end{lem}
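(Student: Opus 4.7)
The plan is to combine two classical facts: \'etale morphisms preserve normality, and a normal noetherian scheme is locally irreducible.  Together these force distinct irreducible components of $Y$ to be disjoint.

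First I would verify that $Y$ itself is normal.  Since $f$ is \'etale, for any $y \in Y$ with image $x = f(y)$ the local ring $\mathcal{O}_{Y,y}$ is essentially \'etale over the normal local ring $\mathcal{O}_{X,x}$.  It is a standard fact (see e.g.\ EGA IV) that an essentially \'etale extension of a normal local ring is again normal.  Hence $Y$ is a normal noetherian scheme.

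Next, in any normal scheme every local ring $\mathcal{O}_{Y,y}$ is a normal local ring, and in particular an integral domain; therefore $(0)$ is its unique minimal prime.  Minimal primes of $\mathcal{O}_{Y,y}$ are in bijection with the irreducible components of $Y$ passing through $y$ (each minimal prime is the pullback of the generic point of some component that specializes to $y$).  Thus every point of $Y$ lies on a \emph{unique} irreducible component.  If two distinct irreducible components of $Y$ shared a point $y$, this would yield two distinct minimal primes of $\mathcal{O}_{Y,y}$, a contradiction.  Hence the irreducible components are pairwise disjoint.

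The main (and essentially only) nontrivial input is the preservation of normality under \'etale morphisms; everything else is formal commutative algebra.  One could summarize the whole argument by saying that a locally noetherian normal scheme is the disjoint union of its irreducible components --- which then coincide with its connected components --- and this property is stable under \'etale cover.
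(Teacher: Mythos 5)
The paper does not actually prove this lemma; it simply cites \cite[Expos\'e I, Corollaire 10.8]{SGA1}. Your argument is a correct, self-contained proof of the fact behind that citation, and it is the standard one: \'etaleness preserves normality, the local rings of a normal locally noetherian scheme are domains and hence have a unique minimal prime, and minimal primes of $\mathcal{O}_{Y,y}$ correspond to the irreducible components of $Y$ through $y$, so each point of $Y$ lies on exactly one component. Two small remarks. First, your proof uses neither the irreducibility of $X$ nor the finiteness of $f$ --- normality of $X$ plus \'etaleness of $f$ is all that is needed --- so you have in fact proved a slightly more general statement, which is harmless. Second, when you say ``a normal local ring is an integral domain,'' it is worth being explicit that this is part of the standard definition of a normal scheme (all local rings are normal \emph{domains}); with the weaker convention that local rings are merely integrally closed in their total quotient rings one would invoke the fact that a noetherian normal local ring is a domain (e.g.\ via Serre's criterion). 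Either way the argument is sound.
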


The following result asserts, essentially, ``boundedness''
of the set of irreducible varieties in a fixed projective space and bounded degree.  
Results of this type are also stated and used in work of Salberger \cite[Lemma 1.4, Thm.\ 3.2]{Salberger}
in a similar context; in the interest of self-containedness 
we give a proof of precisely what we use.

\begin{lem} \label{Kleiman}
Let $F$ be a field of characteristic zero. 

\begin{itemize}
\item[(a)] Suppose that $V \subset \mathbb{P}^m_F$ is a closed subvariety   (irreducible, reduced closed subscheme) of degree $d$.  
Then there are bounds, depending only on $m,d$ (not on $F$) for each coefficient of the Hilbert polynomial of $V$,
and in particular the homogeneous ideal $I(V)$  of $V$ is generated in degree $O_{m,d}(1)$. 

\item[(b)] Suppose given integers $m, n, d, R$.  Then there exist bounds $D$ and $N$ with the following property: 
Suppose $V_1, \ldots, V_r$ (with $r \leq R$) is a collection of closed subvarieties of $\mathbb{P}^m_F$, 
with each $V_i$ of dimension $\leq n$ and degree $\leq d$. 
 Let $Z$ be the intersection
\begin{equation} \label{Zdef} Z = \bigcap_{i=1}^{r} V_i. \end{equation}
Then the number of irreducible components of $Z$ is at most $N$,  
and the degree of each such component (endowed with the reduced scheme structure) is at most $D$.  
Furthermore, the bounds $D$ and $N$ are independent of the field $F$.

 \item[(c)] Suppose that $V \subset \mathbb{P}^m_F$ is a  
closed  variety of dimension $n$ and degree $d$ which is {\em not} geometrically irreducible. 
 Then there exist finitely many subvarieties $V_1, \ldots, V_N \subseteq V$, defined over $F$, such that:
\begin{itemize}
\item Each $V_i$ is irreducible of dimension $\leq n-1$,
\item $V(F) = \bigcup V_i(F)$, and
\item the number $N$ of $V_i$'s, and degree of each $V_i$ can be bounded in terms of $n, m, d$ (but independently of the field $F$ and the variety $V$).  
\end{itemize}

\item[(d)]  Suppose that $V \subset \mathbb{P}^m_F$ is a closed variety of dimension $n$ and degree $d$.  Then   
the set of points in $V(F)$ which are singular on $V$ can again be covered by varieties $V_1, \dots, V_N$
with the same properties as (c). 
 \end{itemize}

\end{lem}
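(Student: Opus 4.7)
The plan is to dispatch the four parts in sequence, using uniform boundedness results for subvarieties of projective space.

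For part (a), I would combine the finiteness of Hilbert polynomials of degree-$d$ subvarieties of $\mathbb{P}^m$ with Gotzmann's regularity theorem. The Hilbert polynomial $P_V$ has degree $\dim V \leq m$ and leading coefficient $d/(\dim V)!$, and the set of $P_V$ actually realized by degree-$d$ subvarieties of $\mathbb{P}^m$ is finite, depending only on $m,d$ (by a flat-family argument over $\mathbb{Z}$, or via Chow coordinates). Gotzmann's theorem then bounds the Castelnuovo-Mumford regularity of $V$ purely in terms of $P_V$, hence in terms of $m,d$; in this regularity degree, $I(V)$ is generated.

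For part (b), I would apply a refined Bezout-type inequality (Heintz, Fulton-MacPherson): for closed subvarieties $V_1,V_2\subset\mathbb{P}^m_F$, the sum of degrees of the irreducible components of $V_1 \cap V_2$ is at most $\deg(V_1)\deg(V_2)$. Iterating this for the intersection of $r \leq R$ varieties of degree $\leq d$ gives $\sum_W \deg W \leq d^R$ over the components $W$ of $Z$; since each $\deg W \geq 1$, both the number of components and their individual degrees are bounded by $d^R$, uniformly in $F$.

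Parts (c) and (d) are Galois-descent arguments resting on (a) and (b). For (c), decompose $V_{\overline{F}} = V^1 \cup \cdots \cup V^k$ into geometrically irreducible components (each of dimension $n$ and degree $\leq d$, so $k \leq d$), transitively permuted by $\Gal(\overline{F}/F)$; the hypothesis $k \geq 2$ forces any Galois-fixed $F$-point of $V$ to lie in at least two of the $V^j$, so $V(F) \subset W := \bigcup_{i \neq j}(V^i \cap V^j)$. The set $W$ is Galois-stable, of dimension at most $n-1$, and descends to a closed $F$-subscheme whose $F$-irreducible components are the required $V_i$; the uniformity comes from (b) applied to each $V^i \cap V^j$. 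For (d), in characteristic zero $V^{\mathrm{sing}} \subsetneq V$ is cut out by the appropriate-size minors of the Jacobian of any system of generators of $I(V)$; by (a) one may choose generators of degree bounded in terms of $m,d$, so the Jacobian minors have degree bounded in terms of $m,d$, and (b) then controls the number and degrees of the irreducible components of $V^{\mathrm{sing}}$.

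The main obstacle to watch is ensuring that every bound really is independent of $F$ and of $V$, not merely of $V$ for a given $F$. This works out because Gotzmann's regularity bound is purely combinatorial, the refined Bezout inequality is a statement about cycles valid over any field, and the Jacobian construction in (d) is field-independent; in (c), the transitive-action description of Galois only uses separability of $\overline{F}/F$, which in characteristic zero also ensures that $V_{\overline{F}}$ is reduced so that the geometric decomposition has the form used above.
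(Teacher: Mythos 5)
Your proof is correct, but for parts (a), (b), and (d) it takes a genuinely different and more effective route than the paper. The paper's (a) cites the boundedness theorem of SGA6 (Exp.\ XIII, Cor.\ 6.11) for the Hilbert polynomial and then gets bounded generation of $I(V)$ ``softly'' from finite-typeness of the resulting Hilbert scheme, whereas you extract the generation bound from Gotzmann regularity, which is combinatorially explicit. For (b) and (d) the paper runs a Noetherian induction over (powers of) the Hilbert scheme, invoking EGA constructibility results (number of geometric components, fiber dimension, generic flatness) -- completely non-effective; your refined B\'ezout inequality gives the same conclusions with explicit bounds of the shape $d^{O(R)}$, and your Jacobian-minor description of the singular locus in (d) reduces that part to (a) and (b) rather than to another Hilbert-scheme argument. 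Part (c) is essentially the paper's argument (they use $\bigcap_i W_i$ where you use $\bigcup_{i\neq j}(V^i\cap V^j)$; both are Galois-stable of dimension $\le n-1$ and both contain all $F$-points by transitivity of the Galois action on geometric components). Two points deserve more care. First, the finiteness of the set of Hilbert polynomials of irreducible degree-$d$ subvarieties is the one genuinely nontrivial input in (a), and your parenthetical ``flat-family argument'' is circular as stated (bounding the family is equivalent to bounding the Hilbert polynomials); you should cite the boundedness theorem directly or give the generic-projection argument. Second, refined B\'ezout in its standard form applies to irreducible subvarieties over an algebraically closed field, so in (b) you must first decompose each $V_{i,\overline F}$ into its (at most $d$) geometric components, apply the inequality to each choice of components, and then descend: each $F$-component of $Z$ has degree equal to the sum of the degrees of its geometric components, which inflates your bound from $d^R$ to roughly $d^{2R}$ but changes nothing essential.
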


\begin{comment}
\begin{rem}
In statement (c) we understand $\bigcap V_i$ to be endowed with its scheme
structure as a fiber product; it need not be reduced, and neither 
do its irreducible components, which are considered as schemes via
a  schematic closures of their generic points.  However, the statement immediately implies
the same result if we endow $Z$ instead with its reduced scheme structure. 
\end{rem}
\end{comment}
\begin{exmp}
 As an example of ``bad'' examples for part (a): take 
 a $d$-dimensional variety, and adjoin to it a large set of disjoint points;
this modification does not affect the degree, and shows the need for irreducibility or at least
 equidimensionality. Similarly, consideration of
 embedded points shows that ``reduced'' is also important. 

As an example of the situation in part (c), consider the plane curve defined by $x^2 + y^2 = 0$.
This is irreducible over $\mathbb{Q}$ but not geometrically irreducible;
it only has one rational point $(0, 0)$, which is contained in a zero-dimensional, geometrically irreducible subvariety.
More generally, suppose $F$ is a number field, and consider ``the affine line over $F$, with the origin reduced to a $\mathbb{Q}$-point'' -- that is, $\operatorname{Spec} ( \mathbb{Q} + T F[T])$.
If $F \neq \mathbb{Q}$, this scheme is irreducible, but geometrically reducible, and its only $\mathbb{Q}$-point is the origin.
(Taking $F = \mathbb{Q}[i]$ recovers the original example.)
 \end{exmp}

\proof

 The first assertion of (a) follows from \cite[Expos\'e XIII, Corollary 6.11(a)]{SGA6}.
 That assertion applies as formulated to ``special positive cycles'' over an algebraically closed field;
 in our situation $V_{\overline{F}} \subset \mathbb{P}^m_{\overline{F}}$
 is   reduced equidimensional and its decomposition into irreducible components give the closed
 subschemes appearing in {\em loc. cit.} D{\'e}finition 6.9.
 
The consequence 
on bounded generation of $I(V)$ follows, because such a bound on generation of
the defining ideal is valid in any finite type subscheme
of the Hilbert scheme.

For (b), we may as well consider one fixed $r$.
Let $\mathcal{P}$ be the finite set of polynomials arising from (a),
and  let $\operatorname{Hilb}$ be the Hilbert scheme parametrizing closed subschemes of $\mathbb{P}^m$
with Hilbert polynomial in $\mathcal{P}$.  This is a finite-type $\mathbb{Q}$-scheme by (a). 
Now tuples $(V_1, \ldots, V_r)$ are classified by suitable $K$-points of $\operatorname{Hilb}^r$, which is again of finite type; 
and the result now follows from standard results on families over a finite-type base.

Specifically, we have the universal schemes $\mathcal{V}_1, \ldots, \mathcal{V}_r$ over $\operatorname{Hilb}^r$; 
let $\mathcal{Z} \subseteq \mathbb{P}^m \times \operatorname{Hilb}^r$ be their fiber product over $\mathbb{P}^m$,
so fiberwise $\mathcal{Z}$ gives the intersection of the $\mathcal{V}$s (although
with a possibly non-reduced schematic structure). We will work by Noetherian induction.
Let $\eta$ be the generic point of a closed irreducible subscheme $H \subseteq \operatorname{Hilb}^r$.
We will show that there exists a relatively open subset $U \subseteq H$ (that is, open in $H$, but not necessarily in $\operatorname{Hilb}^r$)
such that the number, dimensions, and degrees of the irreducible components of fibers $\mathcal{Z}_h$, for $h \in U$, are bounded.
Here, as in the statement, ``degree'' is taken with reference to the reduced scheme structure. 
 
The number of irreducible components of any geometric fiber of $\mathcal{Z}$ is bounded by \cite[9.7.9]{EGAIV3}.
This bounds the number of geometric components, and so also the number of irreducible components, 
of any  $Z$ as in \eqref{Zdef}.  

Now we turn to the degree.  
For each $j$ with $0 \leq j \leq n$, 
let $Z_j$ be the closure in $\mathcal{Z}$ of the union of $j$-dimensional components of $\mathcal{Z}_{\eta}$ (thought of, for now, merely as a closed subset of $\mathcal{Z}$ in the Zariski topology; we will revisit the issue of scheme structure shortly.)   
In particular, the fiber over $\eta$ of each $Z_j$ is equidimensional of dimension $j$.
By \cite[9.5.1, 9.5.5]{EGAIV3}, we can restrict to an open $U \subseteq H$, on which:
\begin{itemize}
\item[(i)] For each $s \in U$, the fiber $(Z_j)_s$ is equidimensional of dimension $j$,
\item[(ii)] For each $s \in U$, the fiber $\mathcal{Z}_s$ is set-theoretically covered by the various $(Z_j)_s$, and
\item[(iii)] For each $s \in U$, and for all $j' < j$, the intersection $(Z_j)_s \cap (Z_{j'})_s$ has all components of dimension strictly less than $j'$.
\end{itemize}

Take $s \in U$ and let $K$ be any irreducible component of $\mathcal{Z}_s$, say of dimension $q$;  
by (ii) it is contained in some irreducible component of some $(Z_j)_s$.
Since $K$ is maximal among irreducible subsets, we must have equality here, i.e.\ 
$K$ coincides with an irreducible component of this $(Z_j)_s$, and
  since $(Z_j)_s$ is equidimensional of dimension $j$
we must have $j=q$.

Now let us endow $Z_j$ (so far merely a closed set) with its reduced scheme structure. 
 But now by generic flatness (\cite[6.9.1]{EGAIV2}), 
we further shrink $U$ so that each $Z_j$ is flat over $U$.
Then for each $j$, the degree of each $(Z_j)_s$ is independent of $s$.
Now, the scheme structure on $(Z_j)_s$ need not be reduced,  but
nonetheless 
there is an inequality of degrees $\mathrm{deg} \ (Z_j)_s \geq \mathrm{deg} \ (Z_j)_s^{\mathrm{red}}$
(where $(Z_j)_s^{\mathrm{red}}$ denotes the fiber taken with the reduced scheme structure)
and the degree of $(Z_q)_s^{\mathrm{red}}$   bounds from above
the degree of $K$ taken with its reduced scheme structure. 
 
Now we turn to (c).
Consider the geometrically irreducible components $W_1, \dots, W_h$ inside the base change $V_{\overline{F}}$  of $V$ 
to an algebraic closure.  
Note that $h$ is bounded by the degree of $V$, which we have assumed bounded,  
and similarly the degree of each $W_i$ is bounded by the degree of $V$.  
The intersection $\cap_{i=1}^h W_i$ (with its reduced structure)
is a Galois-stable closed subscheme of $V_{\overline{F}}$ and thereby descends to a reduced $F$-subscheme $W \subset V$.
Since $V$ is not geometrically irreducible, we know that $\operatorname{dim} W \leq \operatorname{dim} V - 1$.
Also (b), applied with $F$ replaced by $\overline{F}$, implies that $W_{\bar{F}}$ has a bounded number of irreducible components, each of bounded degree.
The same is then true for the $F$-scheme $W$. 
We claim, further, that $W(F) = V(F)$.
To see this, note that any $F$-point of $V$ must be Galois-invariant; 
since Galois permutes the geometric components of $V$ transitively,  the associated element of $V(\overline{F})$
belongs to all $W_i$. 
(See \cite[Lemma 0G69]{Stacks} for a similar argument.) 

The argument for (d) is similar to that for (b). Again we can parameterize all such $V$ by a suitable finite type Hilbert scheme $\mathrm{Hilb}$ 
and, writing $\mathcal{H} \rightarrow \mathrm{Hilb}$ for the universal subscheme, the smooth locus of $\pi$ coincides
with the set of points of $\mathcal{H}$ that are smooth in their fiber over $\mathrm{Hilb}$
(see e.g.\ \cite[17.5.1]{EGAIV4}).  
 Let $\mathcal{Z}$
be the complement of this smooth locus, endowed with the reduced structure. Then proceed as in (b).  
\qed

Finally, the following theorem of Broberg \cite{Broberg}
builds on fundamental ideas of Heath-Brown \cite{HBR} and Bombieri-Pila \cite{BP}: 

\begin{thm}[Broberg, 2004]   \label{Brobs}
Let $V \subset \mathbb{P}^M_{K}$ be an irreducible closed subvariety of dimension $n$ and degree $d$.
Then the points
of $V(K)$ whose  naive height is at most $H$ are
contained in a set of $K$-rational  divisors of  cardinality $\ll_{\eps, M} H^{\frac{n+1 +\eps}{d^{1/n}}  }$,
and each of which has degree $O_{\eps, M}(1)$.
\end{thm}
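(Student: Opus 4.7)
The plan is to prove Theorem~\ref{Brobs} by the $p$-adic determinant method of Bombieri--Pila and Heath--Brown, in the higher-dimensional refinement due to Broberg. The key idea is to fix a prime $\mathfrak{p}$ of $K$ of good reduction for $V$, partition $\{P \in V(K) : \mathrm{ht}(P) \le H\}$ according to the reduction in $V(\F_q)$ (where $q = |\OO_K/\mathfrak{p}|$), and show that the points in each residue class lie on a single $K$-rational divisor of bounded degree on $V$. Since the number of residue classes is $\le \#V(\F_q) = O_{d,M}(q^n)$ by Lang--Weil, optimizing the choice of $\mathfrak{p}$ yields the theorem.

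For a fixed residue point $\bar x \in V(\F_q)$, choose an integer $D$, put $N = h_V(D) = \tfrac{d}{n!}D^n + O(D^{n-1})$, and pick a basis $f_1,\ldots,f_N$ of $\Gamma(V,\OO(D))$ adapted to the filtration of $\OO_{V,\bar x}$ by the powers of its maximal ideal, so that the $\mathfrak{m}_{\bar x}$-adic orders $a_i := \mathrm{ord}_{\bar x}(f_i)$ are nondecreasing. (Points of $V(K)$ reducing to singular points of $V_\mathfrak{p}$ form a small auxiliary set: via Lemma~\ref{Kleiman}(d) the singular locus is covered by finitely many bounded-degree subvarieties of lower dimension, to which one applies induction on $\dim V$.) Given any $N$ points $P_1,\ldots,P_N$ of $V(K)$ of height $\le H$ all reducing to $\bar x$, form the evaluation matrix $M = (f_i(P_j))$. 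If $\det M = 0$, any row relation $\sum_i c_i f_i(P_j) = 0$ ($j=1,\dots,N$) produces a nonzero $f = \sum c_i f_i \in \Gamma(V,\OO(D))$ whose zero-divisor on $V$ contains every $P_j$; applied to an exhausting family, this shows the entire residue class lies in one such divisor of degree $\le D$.

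That $\det M$ must vanish follows from the product formula. At each archimedean place $v$, Hadamard's inequality gives $|\det M|_v \le (cH^D)^N N^{N/2}$, so the archimedean contribution to $\sum_v \log|\det M|_v$ is at most $[K:\Q]\,DN\log H + O(N\log N)$. At $\mathfrak{p}$, since $P_j \equiv \bar x \pmod{\mathfrak{p}}$ and $f_i$ vanishes to order $a_i$ at $\bar x$, each entry $f_i(P_j)$ lies in $\mathfrak{p}^{a_i}$, so $v_\mathfrak{p}(\det M) \ge \sum_i a_i$. All other non-archimedean contributions are nonpositive. Therefore $\det M = 0$ is forced once
\[
\Bigl(\sum_{i=1}^N a_i\Bigr)\log q \;>\; [K:\Q]\,DN\log H + O(N\log N).
\]

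Optimizing requires a sharp lower bound for $\sum_i a_i$ in terms of $d$, which is the technical heart of Broberg's argument and the main obstacle. Using that $V$ has degree $d$, one shows $\sum_i a_i \gtrsim \tfrac{n}{n+1}\,d^{1/n}\,DN$ as $D \to \infty$: intuitively, the ideal filtration at $\bar x$ admits nonzero global sections of order up to roughly $d^{1/n} D$ (the jet count $\binom{k+n}{n} \sim k^n/n!$ matches $h_V(D) \sim dD^n/n!$ precisely at $k \sim d^{1/n} D$), and summing $k$ against the jet densities $\binom{k+n-1}{n-1}$ yields the claimed constant. Substituting, the vanishing condition becomes $\log q \gtrsim \tfrac{n+1}{n}\,d^{-1/n}\log H$, so choosing $\mathfrak{p}$ with $q \sim H^{(n+1)/(nd^{1/n})}$ (available by the prime ideal theorem for $K$) and $D = D(\eps)$ large enough to absorb $O(N\log N)$ errors yields a total of $O(q^n) = O_\eps(H^{(n+1+\eps)/d^{1/n}})$ divisors, each of degree $\le D = O_\eps(1)$. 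The remaining issues -- uniformity in $\bar x$, handling of singular residue points, and avoidance of the finitely many primes of bad reduction -- are routine given Lemma~\ref{Kleiman} and standard spreading-out.
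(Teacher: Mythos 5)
This statement is not proved in the paper at all: it is Broberg's theorem, quoted from \cite{Broberg}, and the paper offers only the informal outline of Remark \ref{Brobremark}. Your proposal follows exactly the same $p$-adic determinant method as that outline (and as Broberg's actual argument): partition low-height points by residue class at a well-chosen prime, bound the archimedean size of the evaluation determinant by Hadamard, bound its $\mathfrak{p}$-adic valuation from below by a jet count at the residue point, and force vanishing via the product formula. Your heuristic $\sum_i a_i \gtrsim \tfrac{n}{n+1}d^{1/n}DN$ is precisely the paper's $v_p(\Delta)\gtrsim ke\cdot\frac{d^{1/n}}{1+1/n}$, and you correctly identify making this rigorous and uniform in $V$ as the technical heart (for uniformity the paper points to Lemma \ref{Kleiman}(a), which supplies the bounded ideal generation Broberg assumes).

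One concrete step in your sketch would fail as written: the treatment of points reducing to singular points of the special fiber. Lemma \ref{Kleiman}(d) covers the singular locus of $V$ in characteristic zero, but a point of $V(K)$ can be a smooth point of $V$, lie on no fixed proper subvariety, and still reduce to a singular point of $V_{\mathfrak{p}}$ (the reduction acquires new singularities). So you cannot dispose of these points by induction on subvarieties of $V$; they must be handled at the level of residue classes, e.g.\ by noting that the singular locus of the reduction has dimension $\leq n-1$ and bounded degree, hence only $O(q^{n-1})$ residue points, for which one can afford a weaker per-class estimate (this is how Broberg and Salberger proceed). With that repair, and with the minor normalization point that the spurious $[K:\Q]$ factor in your product-formula inequality disappears once the height and the $\mathfrak{p}$-adic absolute value are normalized consistently relative to $K$, the proposal is a faithful reconstruction of the cited proof.
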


We note that, as stated in \cite{Broberg}, Broberg requires a bound on the generation of the ideal of $V$. This bound is however automatic from Lemma \ref{Kleiman} part (a). 
 Also ``divisor'' in the statement means ``effective Cartier divisor.''  

\begin{rem} \label{Brobremark}
Because Theorem \ref{Brobs} is so crucial, particularly its dependence on degree, 
we briefly outline 
where it comes from, taking $K=\Q$ to simplify notation; this is not needed in the remainder of the paper. 

One chooses a large integer $k$ 
and embeds $V \hookrightarrow \mathbb{P}^{e-1}$
via  a basis of sections of $\Gamma(V, \mathcal{O}(1)^{\otimes k})$.  In fact, we can and do choose from $\Gamma(\P^M,  \mathcal{O}(1)^{\otimes k})$ a set of monomials $f_1, \ldots, f_e$ of degree $k$ in the $M+1$-variables which freely span $\Gamma(V, \mathcal{O}(1)^{\otimes k})$.

Choose a ``good'' prime $p$ and examine a collection of $e$ points $P_i \in V(\Q)$
which all reduce to the same point modulo $p$ of $\P^M$, and whose height is at most $H$.  Expressing each $P_i$ in coprime integer coordinates, 
we can speak of the evaluation $f_i(P_j) \in \mathbb{\Z}$.    

Consider
$$ \Delta := \det \left[ f_i(P_j) \right]_{1 \leq i,j \leq e} \in \mathbb{Z}$$
which measures the volume of the $e$-simplex in $\mathbb{Z}^e$ spanned by the $P_i$ and the origin. 
On the one hand, 
 $\Delta$ is bounded by a constant multiple of $H^{ke}$. 
 
On the other hand, $\Delta$ is highly divisible by $p$,
because the values of $f_j$ modulo power of $p$ are highly constrained,
and therefore there are many relations (mod $p^k$) between rows of $\Delta$. 
To see this more formally, fix $r \geq 1$ and let 
denote by $V(\Z_p)_0$ the subset of $V(\Z_p)$
consisting of points with a given reduction modulo $p$. Set
$$M_r := \{ \mbox{functions: $V(\Z_p)_0 \rightarrow \Z/p^r$}\}.$$
Each $f_j$ gives an element of $M_r$, by evaluation and reduction modulo $p^r$. 
For $r=1$,  all these functions (for varying $j$) lie in a $\Z/p$-module of rank one: the constant functions.
For $r=2$, these functions
depend only on the  ``constant term'' and ``derivative'' of $f_j$,
and thereby 
lie in a  $\mathbb{Z}/p^2$-submodule of $M_2$  of rank $n+1$.
For $r=3$ we get $\mathbb{Z}/p^3$-submodule of $M_3$ of rank $(n+1)(n+2)/2$,
where the number comes from counting  possible 
Taylor expansions of $f_j$ up to degree two.  
Each such statement gives linear constraints on the rows of $f_i(P_j)$, 
and therefore leads to divisibility for $\Delta$. 
    
Computing with this we find 
$$v_p(\Delta) \gtrsim k e \cdot \frac{ d^{1/n} }{1+1/n},$$
where $d$ arises on the right-hand side 
eventually through the asymptotic behaviour of $e = \dim \Gamma(V, \mathcal{O}(k))$,
cf.\ \eqref{growth}.
Choosing  $p$ so that
$p^{v_p(\Delta)}$ is larger than the size bound $H^{ke}$ then forces $\Delta=0$; 
so the points $P_i$ lie on a hyperplane of $\mathbb{P}^{e-1}$, i.e.
all the points of $V$ with a fixed mod $p$ reduction lie on a divisor. 
So we produce $\approx p^{n}$
divisors covering the points of height $\leq H$. The argument above is so flexible -- in particular, using freedom to choose $p$ --
that one can achieve bounds that are uniform in $V$.

Notice the crucial point: as the degree of $\mathcal{O}(1)$  on $V$ increases, $\mathcal{O}(1)^{\otimes k}$ has more sections for a fixed $k$, giving
stronger divisibility for $\Delta$ and thus stronger bounds.

\end{rem}

\section{Reduction to Theorem \ref{thm2}}
We describe how Corollary \ref{Corhyp}  is reduced to  Theorem \ref{thm2}. 
\label{Main1}

\subsection*{Deduction of Corollary \ref{Corhyp} from Theorem \ref{thm1}} \label{Corproof}

We will focus on the first  statement of the Corollary, with $S$-integral points, 
and remark at the end of the proof on the only modification needed to handle
the statement about fixed discriminant.

For $n=2$ (binary forms of degree three and above) one in fact knows finiteness (Birch--Merriman \cite{BM}). 
The same is true for the case $n=4, d=3$ of cubic surfaces (Scholl \cite{scholl:delpezzo}). 
 
We may now restrict to the remaining cases $n \geq 3, d \geq 3$ and $(n,d) \neq (4,3)$.
 We will apply  Theorem \ref{thm2} taking $X=\mathbb{P}^M$ the projective space
 parameterizing polynomials of degree $d$ in $n$ variables up to scaling,
 with $Z$ the zero-locus of the discriminant, and taking the geometric variation of Hodge structure to arise
 from the middle cohomology of the universal family of hypersurfaces over $X^{\circ}$.  
Here the infinitesimal Torelli theorem is known, see \cite[Thm.\ 9.8(b)]{Griffiths}; that is, every fiber of the period map is locally contained in an orbit of $\mathrm{PGL}_n(\C)$; so,
noting that the Weil height of $[a_0: \dots : a_M]$ is given by $\prod_{v} \max_{i} |a_{i,v}|$ and is thereby bounded  
by a power of $B$ in the situation of the Corollary, 
Theorem \ref{thm2} shows that the integral points in question are covered by $O_{\eps}(B^{\eps})$ orbits of $\PGL_n(\C)$
or equivalently $\GL_n(\C)$.

We must replace $\GL_n(\C)$ by $\GL_n(\Z[S^{-1}])$. 
This is not difficult
but one must take care because a hypersurface could have automorphisms in characteristic $p$ that do not lift to characteristic zero.

Fix $P_0$ as in the Corollary. We  will show that the number of $\GL_n(\Z[S^{-1}])$-orbits on
$S$-integral polynomials  $P \in \GL_n(\C)  P_0$  with $S$-integral discriminant is bounded in terms of $n,d,S$.   Let $h$ be the degree of the discriminant polynomial.  For any  $S$-integral $P \in \GL_n(\C) P_0$ with $S$-integral discriminant, there exists a rescaling of $P$ by an $S$-unit
 whose discriminant  has $p$-valuation between $0$  and $h$. 
 It suffices, then, to show that for any integer $N = \prod_{p \in S} p^{a_p}$ (with $0\leq a_p < h$)
 the set of $S$-integral polynomials in $\GL_n(\C) P_0$ with discriminant $N$
lie in a union of $O_{n,d,S}(1)$  orbits of $\SL_n(\Z[S^{-1}])$. 

Now, write  $Y$ for the affine hypersurface defined by $\mathrm{disc}(P)=N$,
which we can regard as an affine scheme over $\Q$ (and even over $\Z$).
It is equipped
with an action of the $\Q$-algebraic group $G=\mathrm{SL}_n$.
We must show that the intersection of $Y(\Z[S^{-1}])$ with any $G(\C)$-orbit
is covered by $O_{n,d,S}(1)$ orbits of $G(\Z[S^{-1}])$.  We will need:

\begin{quote}  {\em Claim 1:} The action morphism
$G \times Y \rightarrow Y \times Y$
is a {\em finite} morphism of algebraic varieties over $\Q$. 
\end{quote}

This  follows essentially from the theorem of Matsumura and Monsky \cite{MM} that each stabilizer 
$G_y$ for $y \in Y(\C)$ is finite.
The deduction can be carried out using results of geometric invariant theory
to show that the stack of smooth hypersurfaces is separated; see
\cite{JLmoduli}.
We give a self-contained argument using similar ideas.

\proof (of {\em Claim 1.}) 
It is sufficient
to prove
this over $\C$, and, since the morphism is quasi-finite, it is sufficient to prove that it is proper. 

Using the singular value decomposition
one reduces to checking that the action of the diagonal subgroup $T = \{ (t_1, \dots, t_n) \text{ with }
t_i \in \R_+ \}$ on $Y(\C)$ is proper, which can be checked for the analytic topology \cite[XII, Prop 3.2]{SGA1}.  In other words, one must show that for any compact regions $\Omega_1$ and $\Omega_2$ in $Y(\C)$, the set $\{g \in T: g\Omega_1 \cap \Omega_2 \mbox{ nonempty} \}$ is bounded.  It suffices to show that, if  $P = \sum a_{i_i \dots i_n} x_1^{i_1} \dots x^{i_n}$ and $Q = \sum b_{i_1\dotsi_n} x_1^{i_1} \dots x^{i_n}$ and $(t_1, \ldots, t_n) \cdot P = Q$ then the absolute values of the $t_j$ can be bounded in terms of the coefficients of $P$ and $Q$.

Write $\Sigma$ for the set of $I = (i_1, \ldots, i_n)$ such that $a_I \neq 0$.  For each $t = (t_1, \ldots, t_n)$, write $t^I$ for $\prod_j t_j^{i_j} = \exp(\sum i_j \log t_j)$.  Then the maximal absolute value of a coefficient of $(t_1, \ldots, t_n) \cdot P$ is $\max_{I \in \Sigma} |a_I| t^I$, so the condition that $(t_1, \ldots, t_n) \cdot P = Q$ provides upper bounds on $|a_I| t^I$ for all  $I \in \Sigma$. 
 
An upper bound on $|a_I| t^I$ restricts $(\log t_1, \ldots \log t_n)$ to a half-space; we are done once we show that the intersection of these half-spaces over all $I \in \Sigma$ is a compact region in $T$.  
This is the case exactly when the region 
\begin{equation} \label{badset} \{ t \in T: \sum i_j \log t_j > 0 \mbox{ for all $I \in \Sigma$}\}\end{equation}
is {\em empty}. Suppose otherwise; then there exists $t$ in this region 
of the form $t_0^{m_1}, \ldots, t_0^{m_n}$ for some (whence any) $t_0 \in \R_+$ and with $m_j \in \Z$. 
By assumption on $t$, the limit in the analytic topology $P_0 := \lim_{t_0 \rightarrow 0} (t_0^{m_1}, \ldots, t_0^{m_n}) \cdot P$ exists; explicitly,
$P_0 = \sum_{I \in \Sigma_0} a_I t^I$
where 
$\Sigma_0$ is that subset of $\Sigma$ consisting of those $I$ with $\sum i_j m_j = 0$.
Then $\disc(P_0) = \lim_{t_0 \ra 0} \disc (t \cdot P) = \disc(P)$ is nonzero, so $P_0$ cuts out a smooth affine hypersurface.   
But the identity $
\sum m_j X_j \frac{d}{dX_j} P_0 = 0$
means that the $n$ conditions $\frac{dP}{dX_j} = 0$ cutting out the nonsmooth locus  are dependent, 
contradicting smoothness of $P_0=0$
(cf. proof in \cite{MM}).
Thus region \eqref{badset} is empty, so the action of $T$ is proper, so the action of $G$ is proper as well.
This concludes the proof of {\em Claim 1.} \qed
 
Take $y_1, y_2 \in Y(\Z[S^{-1}])$. We will now prove
\begin{quote}
{\em Claim 2 :} The action of $\Gal(\overline{\Q}/\Q)$
on the stabilizers $G_{y_i}(\overline{\Q})$  and also on the set $G_{12} := \{g \in G(\overline{\Q}): g y_1 = y_2\}$
is unramified outside a set of primes $\mathcal{P}$ depending only on $n,d,S$. 
\end{quote} 
 
\proof (of {\em Claim 2}). 
It is enough to prove the claim for $G_{12}$; take $y_1=y_2$ to get the claim about stabilizers. 
 
From finiteness of the action map we see that the matrix entries of
$g^{\pm 1}$ for $g \in G_{12}$ satisfy monic polynomials whose coefficients are rational polynomials in the coordinates of $y_i$.   Take $A_1(y_1,y_2), \ldots, A_M(y_1,y_2)$ to be the finite collection of all coefficients arising in this way.

Now,  for any extension of the $p$-valuation on $\Q$ to $\overline{\Q}$, and for every matrix entry $g^{\pm 1}_{ij}$ of $g^{\pm 1}$  we have 
\begin{equation} \label{cb}  |g^{\pm 1}_{ij}|_p  \leq \max_k |A_k(y_1, y_2)|_p. \end{equation}
Take $P$, larger than any prime in $S$,  such   that the coefficients of each $A_i$ are $p$-integral for all $p > P$; it now follows from \eqref{cb} that, for all $p > P$, any element $g$ which lies in $G_{12}$ for any $y_1,y_2 \in Y(\Z[S^{-1}])$ has for entries roots of monic polynomial with $p$-integral coefficients; in other words, it is  $p$-integral.
Thus, for all such $p$ there is an induced map
$$ G_{12}  \rightarrow \SL_n(\overline{\Z}_p) \rightarrow \SL_n(\overline{\mathbb{F}}_p)$$
which is necessarily injective if we also require $p$ to be larger
than the order of $G_{12}$, since  any torsion element of the kernel of  $\SL_n(\overline{\Z}_p) \rightarrow \SL_n(\overline{\mathbb{F}}_p)$
must have $p$-power order. For general constructibility reasons (similar to Lemma \ref{Kleiman}) 
the order of $G_{12}$ is bounded above; choose $P$ to be larger than this bound.  

Now for any $\sigma$ belonging to the inertia group at $p >P$, and any $g \in G_{12}$,  
$g^{\sigma}$ and $g$ have the same image in $\SL_n(\overline{\mathbb{F}_p})$, so must coincide.  This is precisely to say that the Galois action on $G_{12}$ is unramified at $p$, so we have proved  {\em Claim 2}
with $\mathcal{P}$ the set of primes less than $P$. 
\qed

 Now, with $y \in Y(\Z[S^{-1}])$,  there  is an injection
 $$ \mbox{$G(\Q)$-orbits on $Gy \cap Y(\Q)$} \hookrightarrow \mbox{$G_{y}$-torsors over $\Q$}$$
 sending $y' \in Y(\Q)$ to the right torsor $\{g: gy=y'\}$.  The {\em Claim} means that,
 under this map, elements of $Y(\Z[S^{-1}])$ are sent to  unramified-away-from-$\mathcal{P}$
 torsors for the unramified-away-from-$\mathcal{P}$-group $G_y$, whose order is moreover bounded.  Hermite-Minkowski gives an upper bound on the size of unramified Galois $H^1$ in this setting, and we conclude that
 the {\em  $S$-integral} points lying in $Gy \cap Y(\Q)$ lie in a collection of $G(\Q)$-orbits
 whose cardinality is bounded in terms of $d,n,S$. 
 Finally, we pass to $G(\Z[S^{-1}])$-orbits using \eqref{cb}.  
 This proves the first statement of the Corollary.

 In the last sentence of the argument we used \eqref{cb} only at $p \notin S$;
 but using it also for $p \in S$ allows one to conclude similarly that
 \begin{equation} \label{wwp2} \{y \in Y(\mathbb{Z}): \mathrm{disc}(y) =N,  \mbox{ht.}(y) \leq B\}\end{equation}
 is covered by $O_{\eps}(B^{\eps})$ orbits 
 of $\SL_n(\Z)$.  Here $\mathrm{ht}(y)$ just refers to the largest coefficient of $y$, rather than a Weil height. This is the second statement of the Corollary.

\subsection*{Reduction of Theorem \ref{thm1} to Theorem \ref{thm2}}

 Suppose that $X^{\circ}$ is quasi-projective and  $X^{\circ}_{\C}$
 admits a geometric variation of Hodge structure, i.e.\ there is a morphism
 $$ \pi_{\C}:   \mathfrak{X}_{\C} \rightarrow X_{\C}^{\circ}$$
such that the variation of Hodge structure   
in the theorem statement is a direct summand of $R^i \pi_{\C*} \C$,
for some $i \geq 0$.  (That such $\mathfrak{X}$ exists is understood to be the content of the word ``geometric.'') 

  Theorem \ref{thm1} assumes that the period morphism associated to $\mathsf{V}$
is  locally finite-to-one.
In this case, the period morphism associated to $R^i \pi_{\C*} \C$ has
the same property.   
The issue
to be handled is that $\mathfrak{X}_{\C}$ is defined only over $\C$
whereas Theorem \ref{thm2} requires a $K$-morphism as input. 

To verify sparsity it will suffice,   by descending
Noetherian induction and extending the base field, to produce a proper 
Zariski-closed subset $E \subset X_{\overline{K}}$ 
with the property that integral points on $X^{\circ}$ that 
do not lie inside $E$ are sparse.  (Warning: this is not the same as studying
``integral points on the quasi-projective variety $X^{\circ} \backslash E$.'')

By a standard
spreading out technique we may extend 
$\pi_{\C}: \mathfrak{X}_{\C} \rightarrow X_{\C}$
over the spectrum of a subring $R \supset K$ of $\C$, finitely generated over $K$: 
$$ \pi: \mathfrak{X}_R \rightarrow X_R =  X \times_{\mathrm{Spec} \ K} S,$$
where $S$ is the spectrum of $R$. This recovers $\pi_{\C}$
upon taking the pullback via the map 
$s : \Spec \C \rightarrow S$
associated to $R \rightarrow \C$. 

The image of 
$s$ is the generic point  $\eta_S$ of $S$. 
Since $S$ is the spectrum of the finitely generated integral domain $R$, 
$S \rightarrow \operatorname{Spec} K$ is smooth at $\eta_S$. 
Then, deleting the nonsmooth locus, we may suppose that $S$ is 
a smooth $K$-variety.   

Consider the morphism of vector bundles  supplied by Lemma \ref{KS}, which, applied to the morphism $\mathfrak{X}_R \rightarrow X \times S$, gives
 a morphism of locally free sheaves  \begin{equation}  \label{g2} g: T_{X \times S} \otimes \mathcal{H}_1 \rightarrow \mathcal{H}_2\end{equation}
   over  $X \times S$. 
   Let $T_{X} \subset T_{X \times S}$ be the sub-bundle defined  by the pullback of the tangent bundle of $T_X$. 
   Since $\pi_{\C}$ has finite-to-one period map,     the specialization $g_{x,s}|_{T_X}$ is injective for each $x \in X(\C)$;   
   the same is then true for a Zariski-open neighbourhood
   $U$ of $X \times \{\eta_S\}$ inside $X \times S$. 
   
Choose $s' \in S(\overline{K})$
such that $X(\overline{K}) \times s'$ meets $U(\overline{K})$.
The fiber of $\mathfrak{X}_R \rightarrow X_R$ over
$s'$ gives a  proper smooth morphism of $\bar{K}$-varieties 
$\mathfrak{X}_{s'} \rightarrow X_{\bar{K}}$.
The associated period map has generically injective derivative; let $E \subset X_{\bar{K}}$ be the locus where its
derivative has a nontrivial kernel, i.e., where $g_{x,s'}|T_X$ fails to be injective. 
$E$ is a proper Zariski closed subset of $X$ defined over some finite extension $K_1 \supset K$, and  
Theorem \ref{thm2} (applied after passage to $K_1$)  implies that integral points on $X^{\circ}$ that lie on the complement of $E$ are sparse.  
This establishes the inductive step, and therefore
concludes the reduction of Theorem \ref{thm1} to Theorem \ref{thm2}.

\section{Proof of Theorem \ref{thm2}}
The remainder of the paper is devoted to the proof of Theorem \ref{thm2}. We use notation as in the statement.
We will fix throughout a good integral model for both $(X,Z)$ and the morphism $\mathfrak{X} \rightarrow X^{\circ}$,
as in Definition \ref{goodmodel}.  

As we have noted, the proof involves proving that integral points of $X$ lie on
various collections of subvarieties, whose dimension will be steadily reduced until they
are either points or fibers of the period map.  
The key inductive statement used to reduce the dimension of the subvarieties is
Lemma \ref{lem:induction}.  

It may be helpful to note, in advance, that we will not need to keep track of any {\em integral} structure on our subvariety. The notion
of ``integral point on a subvariety'' will simply mean a $K$-rational point of the subvariety that is integral
as a rational point on $X$. 

\subsection{Large fundamental group.} \label{large}

Enlarging $S$ if necessary, we fix a prime 
 $p \in S$ for which the integral cohomology of complex  fibers of $\mathfrak{X}$ over $X^{\circ}$
is $p$-torsion-free, say of rank $r$ over $\Z$. 
Fixing $x \in X^{\circ}(\C)$ 
we get a monodromy representation of the topological fundamental group
\begin{equation} \label{Gmdef} \pi_1(X^{\circ, \an}, x) \longrightarrow G_n :=  \mathrm{Aut}(H^i(\mathfrak{X}_x, \Z/p^n)) \simeq \GL_r(\Z/p^n \Z).\end{equation}

In this setting, the {\em global invariant cycle theorem} (\cite[Corollaire 4.1.2 and 4.1.3.3]{Deligne_Hodge2}) implies the following statement:  For any complex irreducible subvariety $\iota: V \hookrightarrow X_{\C}$,
 not contained in $Z_{\C}$ and with $V^{\circ}$ not contained in a fiber of the period map,   
 the image of the monodromy representation 
\begin{equation} \label{i-inf}  \pi_1(V^{\circ,\an}) \rightarrow  G_n \end{equation}
(topological $\pi_1$, taken for an arbitrary choice of basepoint) has size that grows without bound as $n \rightarrow \infty$. 
Indeed \cite[Corollaire 4.1.2 and 4.1.3.3]{Deligne_Hodge2}
applies, after passing from $V$ to the smooth part $V'$ of its intersection
with $X^{\circ}$, to show that the image of  
$$ \pi_1({V'}^{\an}) \longrightarrow \mathrm{Aut} \ H^i(\mathfrak{X}_x, \Q)$$
is infinite.  
In particular, the image of the monodromy representation of $\pi_1((V^{\circ})^{\an})$
on $H^i(\mathfrak{X}_x, \Z)$ is infinite, 
so the size of the image of monodromy on $H^i(\mathfrak{X}_x, \Z)/p^n$
grows without bound as $n \rightarrow \infty$.  

These results transpose, as usual, to the {\'e}tale topology.
Indeed,   $R^i \pi^{\et}_*(\Z/p^n \Z)$  defines
a locally constant {\'e}tale sheaf  of $\Z/p^n$-modules on $X^{\circ}$,
which, by the local constancy of direct images for a smooth proper morphism (\cite[Theorem 5.3.1]{SGA4.5}),
extends to a locally constant {\'e}tale sheaf on the good integral model $X^{\circ}_S$ over $\mathfrak{o}_S$. 
  
For $V$ as above, standard comparison theorems show that the homomorphism in \eqref{i-inf} factors as
\[ \pi_1(V^{\circ,\an}) \rightarrow \pi_1^{\et}(V^{\circ}) \rightarrow G_n, \]
so
 the image of {\'e}tale $\pi_1$ 
\begin{equation} \label{fff} \pi_1^{\et}(V^{\circ}) \rightarrow G_n\end{equation}
(again, with an arbitrary geometric basepoint in $(V^{\circ})$)
has size that grows without bound as $n \rightarrow \infty$.

Next, let $E \supset K$ 
be an arbitrary algebraically closed field, with base change $X_E := X \times_{K} E$;
let $i: V \hookrightarrow X_E$ be a closed $E$-subvariety
not contained in $Z_E$ or a fiber of the period map.
(Note that we can make sense of the latter condition without reference to $\C$ by using Lemma \ref{KS}: 
by ``$V$ is contained in a fiber of the period map'' we mean that the associated morphism of vector bundles is zero on the smooth locus of $V$.)

We get by base change $\pi_E: \mathfrak{X}_E \rightarrow X_E$ and an {\'e}tale local 
system  $R^i \pi^{\et}_{E*} (\Z/p^n \Z)$  on $X_E^{\circ}$; and  
the same conclusion as above holds, i.e.\
the monodromy representation \eqref{fff} for $V$ on $R^i \pi^{\et}_* (\Z/p^n \Z)$ has ``large image''
in the sense specified above. 
We will use this only in the case when $E$ is the algebraic closure
of a finitely generated field; we may then choose a $K$-embedding $\sigma: E \rightarrow \C$,
and thus also $V_{\C} \subset X_{\C}$ compatibly with $V \subset X_E$. 
The local system $R^i \pi^{\et}_{E*} (\Z/p^n \Z)$ on $V_E$
pulls back to the similarly defined system on $V_{\C}$,
so ``large monodromy'' for $V$ follows from the same statement for $V_{\C}$. 
 
\subsection*{Construction of a suitable cover of $X$} \label{bigcover}

Our proof will involve an induction over higher and higher-codimension subvarieties of $X$ about which we know almost nothing apart from their degree.
It is thus crucial to have at hand covers of $X$ whose monodromy is uniformly bounded below on restriction to {\em every} subvariety of bounded dimension and degree.
  
\begin{lem}\label{lem31}  Fix $d, n, D \geq 1$,
and let $H$ be any (locally closed, finite type) complex subvariety of the Hilbert scheme of  
subschemes of $X_{\C}$ of degree $\leq d$ and dimension $n$.

There are a finite group $G$ and  a 
finite morphism $f: \widetilde{X} \ra X$ of $K$-varieties, 
equipped with an injection $G \hookrightarrow \mathrm{Aut}(\widetilde{X} / X)$, \ 
such that:
\begin{itemize}
 \item[(a)] $f|_{X^{\circ}}$ is finite {\'e}tale Galois with deck group $G$, 
 and, moreover, extends to a finite {\'e}tale cover of the good integral model $X^{\circ}_S$. 
 \item[(b)]
Let $U \subset X$ be any $n$-dimensional irreducible  closed complex subvariety  
of degree $\leq d$.
Suppose that:
\begin{itemize}
\item The point of the Hilbert scheme classifying $U$ lies in $H(\C)$, and
\item $U$ is not contained in $Z$ and $U^{\circ \an}$ is not contained in a single fiber of the period map $\Phi$.
\end{itemize}

Let $Q$ be any irreducible component of $f^{-1} U$, endowed with the reduced structure,
and such that the induced finite map $f:Q \rightarrow U$ is dominant
(note that it is automatically \'etale over $U^{\circ}$).  
Then the degree of $f:Q \rightarrow U$ at the generic point is $\geq D$,
i.e., the induced map of function fields has degree $\geq D$. 

\end{itemize} 
\label{lem:bigcover}
\end{lem}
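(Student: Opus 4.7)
The plan is to build $\widetilde X$ from the mod-$p^n$ monodromy of $\pi\colon\mathfrak{X}\to X^\circ$ for a sufficiently large $n$. For each $n$, the étale sheaf $R^i\pi^{\et}_*(\Z/p^n\Z)$ is defined over $K$ and extends to a locally constant sheaf on the integral model $X^\circ_S$ (as recalled in \S\ref{large}); it determines a finite étale Galois cover $f_n\colon X_n\to X^\circ$ with deck group $G_n\subseteq \GL_r(\Z/p^n)$, where $G_n$ is the image of monodromy. A direct computation with the fiber as a $G_n$-torsor shows that, for any irreducible $U\not\subseteq Z$ and any irreducible component $Q$ of $f_n^{-1}(U^\circ)$, the degree of $Q\to U^\circ$ at the generic point equals $|\operatorname{Im}(\pi_1^{\et}(U^\circ)\to G_n)|$. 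Thus it suffices to find a single $n=n_0$ making this image of order $\geq D$ uniformly over all $U\in H$ satisfying the two non-degeneracy hypotheses; then $\widetilde X$ is taken to be the normalization of $X$ in a suitable (geometrically irreducible) component of $X_{n_0}$, with $G$ the corresponding decomposition subgroup of $G_{n_0}$.

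The uniformity reduces to a Noetherianity argument on $H$. Pulling $X_n$ back along the universal family $\mathcal{U}^\circ\to H$ and invoking constructibility of the number of geometric components of fibers (for instance \cite[9.7.9]{EGAIV3}), the ``bad'' locus
\[ H_n^{\mathrm{bad}} := \{U\in H : |\operatorname{Im}(\pi_1^{\et}(U^\circ)\to G_n)| < D\} \]
is constructible in $H$, and the surjections $G_{n+1}\twoheadrightarrow G_n$ give $H_{n+1}^{\mathrm{bad}}\subseteq H_n^{\mathrm{bad}}$. Let $H^{\mathrm{bad}}\subseteq H$ be the locus of $U$ that are either contained in $Z$ or whose smooth locus in $X^\circ$ lies in a fiber of $\Phi$. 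This is closed in $H$: containment in $Z$ is closed because $Z$ is closed in $X$, and the fiber condition is closed by Lemma \ref{KS} applied to the universal family, since vanishing of the morphism \eqref{gdef} along the relative tangent bundle of $\mathcal{U}^\circ$ on its smooth locus cuts out a closed subscheme of $H$. The global invariant cycle theorem, in the form recalled in \S\ref{large}, then yields $\bigcap_n H_n^{\mathrm{bad}}\subseteq H^{\mathrm{bad}}$. Consequently the sets $H_n^{\mathrm{bad}}\setminus H^{\mathrm{bad}}$ form a decreasing chain of constructible subsets of the Noetherian space $H\setminus H^{\mathrm{bad}}$ with empty intersection, and hence are eventually empty by the descending chain property for constructibles in Noetherian spaces. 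Any such $n$ serves as the desired $n_0$.

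The main obstacle is the uniformity step: parlaying the pointwise unboundedness supplied by the global invariant cycle theorem into a single $n_0$ that works for every $U\in H$, which requires handling constructibility of the monodromy orbit size as a function on $H$ and showing that its limit locus lies inside the (manifestly closed) exceptional set $H^{\mathrm{bad}}$. Secondary technicalities are arranging $\widetilde X$ to be a $K$-variety in the sense of \S\ref{integralmodel}, that is, selecting a $K$-irreducible (geometrically irreducible) component of $X_{n_0}$ and the corresponding decomposition subgroup $G\subseteq G_{n_0}$, and checking that the degree estimate for $Q\to U$ survives this restriction — which is automatic once one notes that $\operatorname{Im}(\pi_1^{\et}(U^\circ)\to G_{n_0})$ in fact lands in $G$, because the relevant monodromy is geometric and we have chosen the cover accordingly.
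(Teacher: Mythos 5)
Your strategy is sound and rests on the same two pillars as the paper's own proof --- the large-monodromy statement of \S\ref{large} and the constructibility of fiberwise component counts for the pullback of the cover along the universal family (EGA~IV, 9.7.9) --- but you package the uniformity step differently. The paper runs a Noetherian induction on closed subsets of $H$: at each stage it shrinks to an open locus where the family is ``nice'' (geometrically integral fibers, meeting $X^{\circ}$, non-constant period map), applies \S\ref{large} at the geometric generic point, and spreads the resulting degree bound out over an open subset of $H$ via a spreading-out lemma for irreducible components of fibers. You instead define decreasing constructible bad loci $H_n^{\mathrm{bad}}$ and conclude by compactness of the constructible topology. Note that what you call a ``descending chain property'' is not one: constructible subsets of a Noetherian space do not satisfy DCC ($\mathbb{A}^1\setminus\{1,\dots,n\}$ decreases strictly forever); the correct statement, which is what you need and which is true, is that a decreasing chain of \emph{nonempty} constructible subsets of a Noetherian sober space has nonempty intersection. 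Modulo this, the two arguments are essentially equivalent, and yours is an acceptable and arguably cleaner way to organize the bookkeeping.

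Three points need repair. First, your dictionary ``degree of $Q\to U^{\circ}$ at the generic point $=|\mathrm{Im}(\pi_1^{\et}(U^{\circ})\to G_n)|$'' computes the degree of \emph{connected} components; when $U^{\circ}$ is not normal, an \emph{irreducible} component can have strictly smaller degree (its degree is the order of the image of $\pi_1^{\et}$ of the generic point, equivalently of the smooth locus). This is exactly why the paper works throughout with the fiberwise smooth locus $\mathcal{U}'$, over which Lemma \ref{disjointness} identifies irreducible with connected components, and why \S\ref{large} states large monodromy for the smooth part $V'$; your $H_n^{\mathrm{bad}}$ should be defined via the monodromy of the smooth locus, or via counts of geometric \emph{irreducible} components. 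Second, $H_n^{\mathrm{bad}}$ is only well defined, and the torsor computation only applies, over the locus of $h$ with $U_h$ geometrically integral; the complement is closed (EGA~IV, 12.2.1(x)) and must be absorbed into $H^{\mathrm{bad}}$ --- this is the paper's ``Situation (a).'' Third, for the compactness argument the containment $\bigcap_n H_n^{\mathrm{bad}}\subseteq H^{\mathrm{bad}}$ must hold at \emph{every} scheme point of $H$, not only at $\C$-points; this is precisely why \S\ref{large} goes to the trouble of establishing large monodromy for subvarieties over arbitrary algebraically closed fields (algebraic closures of residue fields of points of $H$), and you should invoke that form of the statement explicitly when applying it to the generic points of the constructible sets $H_n^{\mathrm{bad}}\setminus H^{\mathrm{bad}}$. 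None of these is fatal; each is patched by a device already present in the paper.
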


\begin{proof}

Through the rest of this proof, $U$ will represent a single subvariety of $X$, classified by a point of the Hilbert scheme,  
and we will use $\mathcal{U}$ for the universal family. 

 We are going to find a cover $f: \widetilde{X} \ra X$
 and a proper Zariski-closed subset $H_1 \subseteq H$
 such that the conclusion of (b) holds for any $U=U_h$,
 satisfying the assumptions of (b), and 
 with $h \in (H-H_1)(\C)$. The result will follow by Noetherian induction.
 In particular, removing the singular locus of $H$ at the start,
 we may suppose that $H$ is smooth.   

Take a geometric generic point $\eta \rightarrow H$.  
Let $U_{\eta} \subset X_{\eta}$ be the 
corresponding generic subscheme. 
We may assume without loss of generality that: 
\begin{quote}
{\em Situation:} 
\begin{itemize}
\item[(a)] Every geometric fiber of $\mathcal{U} \rightarrow H$ is integral;
\item[(b)]  Every fiber of $\mathcal{U} \rightarrow H$ meets $X^{\circ}$; 
\item[(c)] On each  fiber $\mathcal{U}_h$
for $h \in H(\C)$
the period map is not locally constant. 
\end{itemize}
\end{quote}

For (a), note that
the locus of points with geometrically integral fiber by \cite[12.2.1(x)]{EGAIV3}
is open on the base, so if there exists one $h \in H(\C)$
for which the fiber $U_h$ is integral, then 
(after shrinking $H$ to a suitable nonempty open neighbourhood) we can suppose it is true for all $h$. If there is no such $h$, then the conclusion of the theorem holds for $H$ vacuously.

 For (b), note that the set of $h$ for which $U_h$ meets $X^{\circ}$ is constructible.
To see this, first note that $U_h$ is reduced for every $h$.  
Now note that $U_h$ meets $X^{\circ}$
if and only if $(U_h \cap Z)_s \rightarrow U_s$ is not surjective, and apply \cite[9.6.1(i)]{EGAIV3}.
Thus, restricting to an open subset of $H$, 
we can assume that $U_h$ meets $X^{\circ}$ either for no $h$ or for all $h$.
In the former case, the statement is vacuously true;
so we can assume that (b) holds for all $h$.
  
For (c) let $\mathcal{U}'$ be the smooth locus of the morphism $\mathcal{U} \rightarrow H$, 
 which, by flatness of the morphism, coincides with the locus of points which are smooth points of their fibers (\cite[17.5.1]{EGAIV4}). 
 Note that:  
 \begin{itemize}
 \item $\mathcal{U}'$ is itself smooth over $\mathrm{Spec} \ K$, since it is smooth over $H$ and $H$ was assumed smooth. 
 \item  $\mathcal{U}'$ contains an open dense subset of every fiber, since these fibers are all integral.
 \item $\mathcal{U}'$ is a $K$-variety: this follows from the previous conditions.
 It is reduced by smoothness, 
and since $\mathcal{U}' \rightarrow H$ is flat (\cite[Tag 01VF]{Stacks}), $H$ is irreducible, and the fibers
are irreducible, it readily follows (\cite[Tag 004Z]{Stacks}) that $\mathcal{U}'$ is itself irreducible.
\end{itemize} 

The tangent bundle $T_{\mathcal{U}'}$ has a sub-bundle $T_{\mathcal{U}'/H}$
 made up of`vertical'' vector fields.  Restricting the morphism of Lemma \ref{KS}  to this sub-bundle we get
 $$ g: \mathcal{H}_1 \otimes T_{\mathcal{U}'/H} \rightarrow \mathcal{H}_2$$
 Now, we may certainly assume there is some $h \in H(\C)$
such that $\mathcal{U}_h$ satisfies the conditions of (b) in the statement of the Lemma, or else the Lemma once again holds vacuously.
In particular, there exists a  point $u \in \mathcal{U}_h(\C)$, smooth in the fiber $\mathcal{U}_h$, 
such that $g_u$ is nonzero. It follows that $g_u$ is nonzero on a nonempty Zariski-open
subset of $\mathcal{U}'$;  the image of this Zariski-open
by the dominant morphism $\mathcal{U}' \rightarrow H$ contains an nonempty open subset of $H$,
and we replace $H$ by this open to   obtain the second part of the {\em Situation.}

 So we proceed assuming ourselves to be in the {\em Situation} above. 
 We continue to write $\mathcal{U}'$ for the smooth locus of $\mathcal{U}/H$
 and $\mathcal{U}'^{\circ}$ for the preimage of $X^{\circ}$ in $\mathcal{U}'$.   Recall that our assumptions
 guarantee that $\mathcal{U}'^{\circ}$ is fiberwise dense in $\mathcal{U}'$.

By \S \ref{large} we can find
an $m$ for which the image of the {\em geometric} monodromy representation of $\pi_1(U_{\eta}^{'\circ})$
in $G_m$ has size at least $D$ (same notation as in \S \ref{large}). 
This choice of $m$ determines a finite \'etale Galois cover  of $X^{\circ}$ with Galois group $G_m$, which extends to a finite {\'e}tale Galois 
cover of the $\mathfrak{o}_S$-model $X^{\circ}_S$.

Let $f: \widetilde{X} \rightarrow X$ be the normalization of $X$ in this $G_m$ cover;
then $\widetilde{X}$ is a normal $K$-variety and the morphism $f$ is finite (although not necessarily flat). 
The action of $G_m$ by deck transformations above $X^{\circ}$ extends uniquely 
to a $G_m$-action on the morphism $f$.

The morphism $f$ gives of course a morphism $f: \widetilde{X}_{\eta} \rightarrow X_{\eta}$
after base-change from $\operatorname{Spec} \ K$ to $\eta$. The restricted map
\begin{equation} \label{fU0} f_{\eta}^{-1} \mathcal{U}_{\eta}'^{\circ} \rightarrow U_{\eta}'^{\circ} \end{equation} 
is finite {\'e}tale and  has degree $\geq D$ restricted to 
each geometric component of the source
by choice of $f$. 
Now this morphism is the geometric generic fiber of a
finite {\'e}tale morphism of smooth $H$-schemes: 
\begin{equation} \label{fU} f^{-1} \mathcal{U}'^{\circ} \rightarrow \mathcal{U}'^{\circ}\end{equation}
and we want to draw the same conclusion about degrees
for the fibers of \eqref{fU} over
a nonempty open subset of $H$. 
This will imply the desired conclusion, for -- with $Q$ as in the statement --
the assumed dominance implies that $Q \cap f^{-1} U'^{\circ}$
is an open nonempty subset of $Q$.

We now use \cite[Lemma 055A]{Stacks} 
(see also \cite[Prop.\ 9.7.8]{EGAIV3} and references therein). 
which guarantees the existence of a morphism $g: H' \rightarrow H$ (which in fact factors as a  finite {\'e}tale
  surjection followed by an open immersion)   such that, after base change of   
$f^{-1} \mathcal{U}'^{\circ} \rightarrow \mathcal{U}'^{\circ}$
 by $g$ -- i.e.\ replacing $\mathcal{U}'^{\circ}$ by $\mathcal{U}'^{\circ} \times_{H} H'$
 and similarly for $f^{-1} \mathcal{U}'^{\circ}$ -- 
  the following assertions hold:
\begin{itemize}
\item[(a)] Each irreducible   component of the generic fiber 
$(f^{-1} \mathcal{U}'^{\circ})_{\eta'}$ (with $\eta'$ the generic point of $H'$ -- not a geometric generic point here) is in fact
a geometrically  irreducible component of that generic fiber. 
\item[(b)] Let $\overline{Z_1}, \dots, \overline{Z_r}$ be the Zariski closures of these generic irreducible components $Z_1, \dots, Z_r$
inside $f^{-1} \mathcal{U}'^{\circ}$. 
These $\overline{Z_i}$ give, upon intersection with the fiber  $(f^{-1} \mathcal{U}'^{\circ})_{h'}$ above any $h' \in H'$, 
the decomposition of that fiber
into irreducible components, and indeed each of these irreducible components are geometrically irreducible.  \end{itemize}

In the decomposition of (b) 
 $$ (f^{-1} \mathcal{U}'^{\circ}) = \coprod \overline{Z_{\alpha}}$$
 the sets $Z_{\alpha}$ are disjoint.
 Indeed, upon restriction to each fiber, this decomposition recovers the decomposition of $f^{-1} \mathcal{U}'^{\circ}_{h'}$;
 however, this is finite {\'e}tale over $\mathcal{U}'^{\circ}_{h'}$ and Lemma \ref{disjointness} implies the disjointness.   
 In particular, the $\overline{Z_{\alpha}}$ are 
 both closed and open,  and in particular inherit a scheme structure as open sets in $f^{-1} \mathcal{U}'^{\circ}$. 
The restriction of the map $f$ to each $\overline{Z_{\alpha}}$ is  then a finite {\'e}tale map
 $f_{\alpha}:\overline{Z_{\alpha}} \rightarrow \mathcal{U}'^{\circ}$.
 The degree of such a map is locally constant on the base, and here, by assumption,
 that degree is $\geq D$ 
 everywhere on the generic fiber $\mathcal{U}'^{\circ}_{\eta'}$.   
 That generic fiber is dense because it contains every generic point of 
 of $\mathcal{U}'^{\circ}$, and, consequently   the degree of $f_{\alpha}$ is everywhere $\geq D$. 
Restricting to a single fiber $\mathcal{U}'^{\circ}_{h'}$   
for $h' \in H'$ gives the desired conclusion --
that is,  the  bound stated in (b) of the Theorem holds for all fibers $U_h$
for all $h$ in a nonenmpty open subset of $H$, explicitly, the image of $H' \rightarrow H$.
 
Finally, we conclude by Noetherian induction.
We have shown that, given $H$, there is a Zariski-closed $H_1 \subseteq H$
and an $m > 0$, such that the image of geometric monodromy in $G_m$ 
gives an \'etale cover for $X$, that satisfies the required properties for 
any $h \in (H - H_1)(\mathbb{C})$.
There is no harm in replacing $G_m$ by $G_{m'}$, for $m' \geq m$.
Thus we can apply Noetherian induction to find one $m$ that works for all $h \in H(\mathbb{C})$.
\qed

\subsection*{Bounding rational points on a subvariety} \label{ss:induction}
The following result is the key inductive step.
We note that {\em all constants appearing in this discussion are permitted to 
depend on the variety $X$, and indeed on the integral model 
chosen in \S \ref{integralmodel}, without explicit mention.}  
\begin{lem}
\label{lem:induction}
Let $V$ be a geometrically irreducible closed subvariety of $X$ defined over $K$,   
of dimension $n$ and degree $d$, such that $V_{\C}$
is not contained in $Z$ and $V^{\circ \an}$ is not 
contained in a fiber of $\Phi$. 

Then all integral points of $X^\circ$ of height $\leq B$
that lie on $V$ can be covered by 
$O_{d,\eps} (B^{\eps})$ irreducible (but not necessarily geometrically irreducible) subvarieties, all defined over $K$, 
with dimension $\leq n-1$ and degree $O_{d, \eps}(1)$.  \end{lem}
\proof 

Choose
$D$ so that $\frac{n+1}{D^{1/n}} < \eps$.
Let $\mathcal{P}$ be the
(finite, by
Lemma \ref{Kleiman}) set of Hilbert polynomials that arise   
from irreducible subvarieties of $X$ of dimension $n$ and degree $d$, 
and let $H_{\mathcal{P}}$ be the associated Hilbert scheme.  Write $H^{red}_{\mathcal{P}}$ for the reduced induced closed subscheme of $H_{\mathcal{P}}$.  

For this choice of $n,d,D, H=H^{red}_{\mathcal{P}}$ 
 take a finite group $G$ and a finite morphism  $f: \widetilde{X} \rightarrow X$ as provided by Lemma~\ref{lem:bigcover}.  We note that $H_{\mathcal{P}}(\C) = H^{red}_{\mathcal{P}}(\C)$, so the passage to the reduced subscheme structure is irrelevant for the statements on complex subvarieties proved in Lemma~\ref{lem:bigcover}. 

For every $x \in X^{\circ}(K)$, the action of $G$ on the fiber of $\widetilde{X}$ over $x$  
defines a class in the Galois cohomology group $H^1(\Gal(\overline{K}/K), G)$.
Concretely, since the Galois action on $G$ is trivial,
$H^1(\Gal(\overline{K}/K), G)$ classifies
homomorphisms $\rho: \Gal(\overline{K}/K) \rightarrow G$ up to conjugacy.
Choosing a point $\widetilde{x}$ above $x$,
we define a homomorphism $\rho$ by the rule
 \begin{equation} \label{tww} \sigma(\widetilde{x}) = \rho(\sigma) \cdot \widetilde{x}\end{equation}
for $\sigma$ in the Galois group.
If $x$ is $S$-integral, this homomorphism
$\rho$ is in fact unramified outside $S$. 
Such a $\rho$ can also be used to twist $\widetilde{X} \rightarrow X$, namely,
one modifies the Galois action on $\widetilde{X}$ through $\rho$;
and then \eqref{tww} means precisely that $x$ will lift to a $K$-rational point on the
twist of $\widetilde{X}$ indexed by $\rho$. 
(See  $\S$ 4.5 and Thm.\ 8.4.1 of \cite{PoonenRP} for further discussion.) 

There are only finitely many homomorphisms $\Gal(\overline{K}/K) \rightarrow G$, unramified outside $S$; 
 call them $\rho_1, \rho_2, \dots, \rho_R$. This list does not depend on $B$. 
 Each such $\rho_j$
can be used to twist $f$ to a map $f_j:\widetilde{X}_j \rightarrow X$.
Our previous discussion now shows that any
integral point of $X^\circ$  lifts along some $f_j$ to a point of $\widetilde{X}_j(K)$. 

For a sufficiently large integer $e$ the pullback $(f_j^* \LL)^{\otimes e}$ is
very ample and defines, after fixing a basis of sections,  a projective embedding  $\widetilde{X}_j \hookrightarrow \mathbb{P}^{M_j}$.
Now the data of the diagram of $K$-varieties and line bundles 
\begin{equation} \label{diag} (X, \mathcal{L}^{\otimes e}) \stackrel{f_j}{\longleftarrow}(\widetilde{X}_j, (f_j^* \mathcal{L})^{\otimes e}) \hookrightarrow (\mathbb{P}^{M_j}, \mathcal{O}(1))\end{equation}
depends on various choices, 
but these choices can (and will) be made once and for all depending only on $d,\eps$.  
Then for $P \in \widetilde{X}_j(K)$ we get 
\begin{equation} \label{imp}
H_\LL(f_j(P))^e \asymp H_{f_j^* \LL}(P)^e  \asymp H_{\mathbb{P}^{M_j}}(P)   
\end{equation} 
where the symbol $\asymp$ means that 
the ratio is bounded above and below by constants that may depend on $f_j$. 
Since there are only finitely many  $f_j$, and their coefficients are bounded in terms of $d$ and $\epsilon$ (and, as always, $X$ and $S$) but don't depend on $B$, 
these constants depend only on $d$ and $\epsilon$. 
  
Therefore, we have shown that  the integral points of $X^\circ$ with height $\leq B$ 
belonging to $V$  all have the form $f_j(P)$, where $P$ is a $K$-rational point of $f_j^{-1}(V)$ with $H_{\mathbb{P}^{M_j}}(P) \leq c_{d,\epsilon}B^e$.
It will suffice to prove the conclusion for those $P$ for which $f_j(P)$ is a {\em smooth} point of $V$,
simply by including each irreducible component of the singular locus of $V$ in the list of subvarieties (see Lemma \ref{Kleiman} part (d) for the necessary bounds). 
 
Let $V' \subset V$ be the (open) smooth locus. 
Consider those geometric components $Q^{\circ} \subset 
 (f_j^{-1} V')^{\circ}$  that  have a $K$-rational point. 
  Because $ (f_j^{-1} V')^{\circ}$ is a finite {\'e}tale cover
of the geometrically irreducible smooth $K$-variety $V'^{\circ}$, its 
geometric components are pairwise disjoint (Lemma \ref{disjointness})
and permuted by the Galois group;  so 
any such $Q^{\circ}$  is defined over $K$ and the number of such $Q^{\circ}$ is bounded in number
by the size of the group $G$.  

The Zariski closure  $Q$ of any $Q^{\circ}$ is again geometrically irreducible and defined over $K$;
we understand it to be endowed with its reduced scheme structure. 
The map $f_j: \widetilde{X}_j \rightarrow X$ induces a compatible map $f_j:  Q \rightarrow V$, 
which is  dominant  since, by construction of $Q$, the image contains a nonempty open set of $V'^{\circ}$. 
Indeed,  $f_j: Q \rightarrow V$ is  {\'e}tale over $V^{\circ}$,
with degree between $D$ and $(\# G)$; the lower bound comes from (b) of Lemma \ref{lem31},
using  also the fact that $f_j$ is a twist of $f$. 

The degree of $V$ with respect to $\mathcal{L}^{\otimes e}$ is $d e^n$, and therefore, 
by Lemma \ref{degree}  
the degree of $Q$, considered as a closed subvariety of $\mathbb{P}^{M_i}$  via \eqref{diag}, satisfies   
$$ D d e^n \leq \mathrm{deg}  Q \leq (\# G) d e^n.$$
 
We apply Theorem \ref{Brobs} to
each $Q$ that arises in the above fashion, i.e.\
to the 
 Zariski closure of any irreducible geometric component of $(f_j^{-1} V')^{\circ}$ that has a $K$-point.
Theorem \ref{Brobs} and our choice of $D$ implies that the 
set of  rational points of $Q$  of height $\leq c B^e$ 
are supported on a set 
of proper  closed subvarieties of $Q$ of degree $O_{d,\eps}(1)$ with cardinality $\ll_{d,\epsilon} B^{2\epsilon}$.  These subvarieties are defined over $K$ and need not be geometrically irreducible.

For any such $Q$ and any such proper subvariety $Y \subset Q$,  
the scheme-theoretic image $f_j(Y)$ under the finite map $f_j$  is a proper subvariety $f_j(Y) \subset V$, in particular, of dimension $\leq n-1$.  
Moreover,   $f_j$ restricts to a finite map $Y \rightarrow f_j(Y)$. 
By Lemma \ref{degree}  the  $\LL$-degree of $f_j(Y)$
is no larger than the $f_j^* \LL$-degree of $Y$, in particular, $O_{d,\eps}(1)$. 
 
The number of maps $f_j$ depends only  on $d,\eps$, 
and the number of $Q$ arising is then at most
the number of $f_j$
 multiplied by the order of $G$, which is again $O_{d,\eps}(1)$. 
  Consequently,  
 the number of  $Y$ arising as in the prior paragraph
 is $O_{d,\epsilon}( B^{2\epsilon})$, concluding the proof (after the obvious
 scaling $\epsilon \leftarrow \epsilon/2$.)   
 \end{proof}

\subsection{Conclusion of the proof of Theorem   \ref{thm2}}
\label{conclude}
\begin{proof} Fix $\eps > 0$.
We use descending induction via  Lemma \ref{lem:induction}. The inductive statement is the following:

\begin{quote}
$(\star)_n$: For every $n$ with $0 \leq n \leq \dim X$, there exists an integer $d_n$ with the following property: for all $B > 0$, 
the $S$-integral points of $X^\circ$ are covered by a collection of 
\[ O_{\eps}(B^{(\dim X - n) \eps}) \]
irreducible subvarieties of $X$, all defined over $K$, each of which is either
\begin{itemize}
\item[--]$\textrm{(a)}_n$: a subvariety of dimension $\leq n$ and degree $\leq d_n$, or   
\item[--]$\textrm{(b)}$:  a geometrically irreducible subvariety that is contained in a single fiber of the period map.
\end{itemize}
\end{quote}

The base case is given by $n = \dim X$, in which case, of course, the single subvariety $X \subseteq X$ suffices.

The implication  $(\star)_n \implies (\star)_{n-1}$ follows from Lemma~\ref{lem:induction}:
Let $\mathcal{V}_n$ be the collection of $n$-dimensional varieties in the statement of $(\star)_n$.
For each $V \in \mathcal{V}_n$, we will
construct a set of varieties covering all the integral points of $X^{\circ}$ lying on $V$. We subdivide into cases:

\begin{itemize}
 
\item $V$ is not geometrically irreducible. 
In this case, 
we take the set $\{V_i\}$ of subvarieties given by 
part (c) of Lemma \ref{Kleiman}. 
These varieties number at most
$O_{n,d_n}(1)$ and they have 
dimension $\leq n-1$ and degree $O_{n,d_n}(1)$.

\item $V$ is geometrically irreducible but $V^{\circ \an}$ is contained in a fiber of $\Phi$:  then    
we take the singleton set $\{V\}$. 

\item $V$ is contained in $Z$; in this case
we can take the empty set $\emptyset$. 

\item $V$ is geometrically irreducible and not contained in $Z$, and $V^{\circ\an}$ is not contained in a fiber of $\Phi$; then we may apply Lemma \ref{lem:induction} to show
that integral points of height $\leq B$ on $V$ are covered by $O_{d_n,\eps}(B^{\eps})$ irreducible $K$-varieties of dimension $\leq n-1$ and degree $O_{d_n,n,\epsilon}(1)$.  
\end{itemize}

We take $d_{n-1}$ to be the largest of the implicit constants $O_{n,d_n}(1)$ and $O_{n, d_{n}, \epsilon}(1)$
appearing in the above proof. Then, to sum up, 
by $(\star)_n$ we know that the $S$-integral points of $X^\circ$ of height at most $B$ are covered by $O_{\eps}(B^{(\dim X - n) \eps})$ subvarieties $V$  
satisfying either $\textrm{(a)}_n$ or (b), 
and we know that for each of those $V$, the subset of those points lying on $V$ is covered by $O_{\eps}(B^{\eps})$ subvarieties 
satisfying either $\textrm{(a)}_{n-1}$ or (b); 
together, these facts yield $(\star)_{n-1}$.  

We emphasize that this is the point in the argument where the uniformity in Broberg's result is crucial.  We have no control of the heights of the varieties making up the collection $\mathcal{V}_n$, and indeed these heights will grow with $B$; but since the implicit constants in Lemma~\ref{lem:induction} depend only on $X$ and $\eps$, not on $V$, this lack of control does not present a problem.

The case $n = 0$ gives the Theorem.
\end{proof}

\section{Author affiliations}
Jordan S.\ Ellenberg, University of Wisconsin

Brian Lawrence, University of California, Los Angeles; \url{brianrl@math.ucla.edu}

Akshay Venkatesh, Institute for Advanced Study

\bibliographystyle{plain}
\bibliography{References}

\begin{thebibliography}{10}

\bibitem{SGA6}
{\em Th\'{e}orie des intersections et th\'{e}or\`eme de {R}iemann-{R}och}.
\newblock Lecture Notes in Mathematics, Vol. 225. Springer-Verlag, Berlin-New
  York, 1971.
\newblock S\'{e}minaire de G\'{e}om\'{e}trie Alg\'{e}brique du Bois-Marie
  1966--1967 (SGA 6), Dirig\'{e} par P. Berthelot, A. Grothendieck et L.
  Illusie. Avec la collaboration de D. Ferrand, J. P. Jouanolou, O. Jussila, S.
  Kleiman, M. Raynaud et J. P. Serre.

\bibitem{BM}
BJ~Birch and JR~Merriman.
\newblock Finiteness theorems for binary forms with given discriminant.
\newblock {\em Proceedings of the London Mathematical Society}, 3(3):385--394,
  1972.

\bibitem{BP}
E.~Bombieri and J.~Pila.
\newblock The number of integral points on arcs and ovals.
\newblock {\em Duke Math. J.}, 59(2):337--357, 1989.

\bibitem{Broberg}
Niklas Broberg.
\newblock A note on a paper by {R}. {H}eath-{B}rown: ``{T}he density of
  rational points on curves and surfaces'' [{A}nn. of {M}ath. (2) {\bf 155}
  (2002), no. 2, 553--595; mr1906595].
\newblock {\em J. Reine Angew. Math.}, 571:159--178, 2004.

\bibitem{Br}
Yohan Brunebarbe.
\newblock Increasing hyperbolicity of varieties supporting a variation of
  {H}odge structures with level structures, 2020.

\bibitem{CCDN}
Wouter Castryck, Raf Cluckers, Philip Dittmann, and Kien~Huu Nguyen.
\newblock The dimension growth conjecture, polynomial in the degree and without
  logarithmic factors.
\newblock {\em Algebra \& Number Theory}, 14(8):2261--2294, 2020.

\bibitem{Coppersmith}
Don Coppersmith.
\newblock Finding a small root of a univariate modular equation.
\newblock In Ueli Maurer, editor, {\em Advances in Cryptology --- EUROCRYPT
  '96}, pages 155--165, Berlin, Heidelberg, 1996. Springer Berlin Heidelberg.

\bibitem{SGA4.5}
P.~Deligne.
\newblock {\em Cohomologie \'{e}tale}, volume 569 of {\em Lecture Notes in
  Mathematics}.
\newblock Springer-Verlag, Berlin, 1977.
\newblock S\'{e}minaire de g\'{e}om\'{e}trie alg\'{e}brique du Bois-Marie SGA
  $4\frac{1}{2}$.

\bibitem{Deligne_Hodge2}
Pierre Deligne.
\newblock Th\'eorie de {H}odge : {II}.
\newblock {\em Publications Math\'ematiques de l'IH\'ES}, 40:5--57, 1971.

\bibitem{EV:2d}
Jordan Ellenberg and Akshay Venkatesh.
\newblock On uniform bounds for rational points on nonrational curves.
\newblock {\em International Mathematics Research Notices},
  2005(35):2163--2181, 2005.

\bibitem{Griffiths}
Philip~A. Griffiths.
\newblock On the periods of certain rational integrals: I.
\newblock {\em Annals of Mathematics}, 90(3):460--495, 1969.

\bibitem{EGAIV4}
A.~Grothendieck.
\newblock \'{E}l\'{e}ments de g\'{e}om\'{e}trie alg\'{e}brique. {IV}. \'{E}tude
  locale des sch\'{e}mas et des morphismes de sch\'{e}mas {IV}.
\newblock {\em Inst. Hautes \'{E}tudes Sci. Publ. Math.}, (32):361, 1967.

\bibitem{EGAIV3}
A.~Grothendieck and J.~Dieudonn\'{e}.
\newblock \'{E}l\'{e}ments de g\'{e}om\'{e}trie alg\'{e}brique. {IV}. \'{E}tude
  locale des sch\'{e}mas et des morphismes de sch\'{e}mas. {III}.
\newblock {\em Inst. Hautes \'{E}tudes Sci. Publ. Math.}, (28):255, 1966.

\bibitem{EGAIV2}
Alexander Grothendieck and J.~Dieudonn\'{e}.
\newblock \'{E}l\'ements de g\'eom\'etrie alg\'ebrique : {IV}. \'{E}tude locale
  des sch\'emas et des morphismes de sch\'emas. {II}.
\newblock {\em Publications Math\'ematiques de l'IH\'ES}, 24:5--231, 1965.

\bibitem{HBR}
D.~R. Heath-Brown.
\newblock The density of rational points on curves and surfaces.
\newblock {\em Ann. of Math. (2)}, 155(2):553--595, 2002.

\bibitem{JL}
A.~Javanpeykar and D.~Loughran.
\newblock Complete intersections: moduli, {T}orelli, and good reduction.
\newblock {\em Mathematische Annalen}, 368:1191--1225, 2016.

\bibitem{JLmoduli}
A.~Javanpeykar and D.~Loughran.
\newblock The moduli of smooth hypersurfaces with level structure.
\newblock {\em manuscripta math.}, 154:13--22, 2017.

\bibitem{Katz_Oda}
N.~M. {Katz} and T.~{Oda}.
\newblock {On the differentiation of de {R}ham cohomology classes with respect
  to parameters}.
\newblock {\em {J. Math. Kyoto Univ.}}, 8:199--213, 1968.

\bibitem{Katz_nilp}
Nicholas Katz.
\newblock Nilpotent connections and the monodromy theorem : applications of a
  result of {T}urrittin.
\newblock {\em Publications Math\'ematiques de l'IH\'ES}, 39:175--232, 1970.

\bibitem{kollar}
J{\'a}nos Koll{\'a}r.
\newblock {\em Shafarevich maps and automorphic forms}.
\newblock Princeton University Press, 2014.

\bibitem{LV18}
B.~Lawrence and A.~Venkatesh.
\newblock Diophantine problems and $p$-adic period mappings.
\newblock {\em ArXiv preprint}, arXiv:1807.02721v1, 2018.

\bibitem{MM}
H.~Matsumura and P.~Monsky.
\newblock On the automorphsims of surfaces.
\newblock {\em J. Kyoto Math Soc.}, 3:347--361, 1964.

\bibitem{PW}
J.~Pila and A.~J. Wilkie.
\newblock The rational points of a definable set.
\newblock {\em Duke Math. J.}, 133(3):591--616, 2006.

\bibitem{PoonenRP}
Bjorn Poonen.
\newblock {\em Rational Points on Varieties}, volume 186 of {\em Graduate
  Studies in Mathematics}.
\newblock American Mathematical Society, 2017.

\bibitem{PS}
Bjorn Poonen and Michael Stoll.
\newblock The valuation of the discriminant of a hypersurface, 2020.
\newblock preprint.

\bibitem{Salberger}
P.~Salberger.
\newblock On the density of rational and integral points on algebraic
  varieties.
\newblock {\em Journal f\"ur die reine und angewandte Mathematik},
  (606):123--147, 2007.

\bibitem{scholl:delpezzo}
AJ~Scholl.
\newblock A finiteness theorem for del {P}ezzo surfaces over algebraic number
  fields.
\newblock {\em Journal of the London Mathematical Society}, 2(1):31--40, 1985.

\bibitem{SGA1}
{\em Rev\^{e}tements \'{e}tales et groupe fondamental ({SGA} 1)}, volume~3 of
  {\em Documents Math\'{e}matiques (Paris) [Mathematical Documents (Paris)]}.
\newblock Soci\'{e}t\'{e} Math\'{e}matique de France, Paris, 2003.
\newblock S\'{e}minaire de g\'{e}om\'{e}trie alg\'{e}brique du Bois Marie
  1960--61. [Algebraic Geometry Seminar of Bois Marie 1960-61], Directed by A.
  Grothendieck, With two papers by M. Raynaud, Updated and annotated reprint of
  the 1971 original [Lecture Notes in Math., 224, Springer, Berlin; MR0354651
  (50 \#7129)].

\bibitem{Stacks}
The {Stacks Project Authors}.
\newblock \textit{Stacks Project}.
\newblock \url{https://stacks.math.columbia.edu}, 2018.

\bibitem{Zuo}
K.~Zuo.
\newblock On the negativity of kernels of {K}odaira--{S}pencer maps on {H}odge
  bundles and applications.
\newblock {\em Asian J.\ Math}, 4(1):279--302, 2000.

\end{thebibliography}

\end{document}